\documentclass[bj,pdftex]{imsart}

\usepackage[T1]{fontenc}
\usepackage[latin1]{inputenc}
\usepackage{amsmath}
\usepackage{amssymb}
\usepackage{amsthm}
\usepackage{epsfig}
\usepackage{times}

\arxiv{arXiv:1407.0335}

\startlocaldefs

\theoremstyle{plain}

\theoremstyle{remark}

\endlocaldefs

%%%%%%%%%%%%%%%%%%%%%%%%%%%%%% User specified LaTeX commands.
\usepackage{amsthm}

\usepackage[usenames,dvipsnames]{color}
\definecolor{Dark}{gray}{.65}
%%%%%%%%% Pour les commentaires %%%%%%%%%%%%%%%%%%%%%%
       % blue, magenta

\newcommand{\changes}[1]{\color{black} {#1} \color{black}}

%%%%%%%%%%%%%%%%%%%%%%%%%%%%%% User specified LaTeX commands.

%%%%%%%%%%%%%%%%%%%%%%%%% DRAFT %%%%%%%%%%%%%%%%%%%%%%
%\usepackage[light,bottomafter,timestamp,conditional]{draftcopy}

%%%%%%%%%%%%%%%%%%%%%%%%%%%%%%%%%%%%%%%%%%%%%%%%%%%%%%%%
\theoremstyle{plain}									%%%% These changed according to Bernoulli example
\newtheorem{theorem}{Theorem}[section]

\newtheorem{lemm}{Lemma}[section]

\theoremstyle{remark}
\newtheorem{remark}{Remark}[section]
\numberwithin{equation}{section}

\newcounter{example}[section]

\def\R{\mathbb{R}}
%\def\I{\mathds{1}}
%\def\I{1\!{\rm l}} % indicatrice
 % log-vraisemblance
% log-vraisemblance
 % Theta infini
 % Theta #1
 % a priori pi #1
 % densite
 % densite
% log-densite
% log-densite
% proba
% P etoile
% estimateur
% K etoile
% E etoile
% E theta
% E theta prime
% Delta theta
% X^n = (X_1,\dots,X_n)

\newcommand{\E}{\textnormal{E}}

\def\N{\mathbb{N}}

\newcommand{\epsi}{\varepsilon}

%\newcommand{\rU}{\ensuremath{\mathrm{U}}}
%\newcommand{\eu}{\ensuremath{\mathrm{e}}}
%\newcommand{\iu}{\ensuremath{\mathrm{i}}}
%\newcommand{\la}{\lambda}\newcommand{\bla}{\text{\mathversion{bold}$\lambda$\mathversion{normal}}}
%\newcommand{\bzero}{\text{\mathversion{bold}$0$\mathversion{normal}}}

% added by NC
%\renewcommand{\F}{{\cal F}}
%\newcommand{\G}{{\cal G}}
%Sobolev

%%%%%%% JB's %%%%%%%%%%%%
\newcommand{\F}{\ensuremath{\mathcal{F}}}

\renewcommand{\v}{\mathbf{v}}

\DeclareMathAlphabet{\mathpzc}{OT1}{pzc}{m}{it}
\newcommand{\fha}{\hat{f}}

\newcommand{\I}{\mathbb{I}}

\usepackage{float}
\renewcommand{\H}{\mathbb{H}}
\newcommand{\un}[1]{\| {#1} \|_{\mathbb{H}_1}} %%% Bartek: I prefer \|, as it has a smaller distance between ||'s
\newcommand{\deux}[1]{\| {#1} \|_{\mathbb{H}_2}}

\makeatother
\usepackage[numbers,sort]{natbib}

\usepackage{enumerate}

\begin{document}

\begin{frontmatter}
\title{A general approach to posterior contraction in nonparametric inverse problems}
\runtitle{Posterior contraction in inverse problems}

\begin{aug}
\author{\fnms{Bartek} \snm{Knapik}\thanksref{a,e1}\ead[label=e1,mark]{b.t.knapik@vu.nl}}
\and
\author{\fnms{Jean-Bernard} \snm{Salomond}\thanksref{b,e2}\ead[label=e2,mark]{jean-bernard.salomond@u-pec.fr}}

\address[a]{Department of Mathematics, Vrije Universiteit Amsterdam, De Boelelaan 1081, 1081 HV Amsterdam, The Netherlands.
\printead{e1}}

\address[b]{Universit\'e Paris-Est, Laboratoire d'Analyse et de Math\'ematiques Appliqu\'ees (UMR 8050), UPEM, UPEC, CNRS, F-94010, Cr\'eteil, France.
\printead{e2}}

\runauthor{Bartek Knapik and Jean-Bernard~Salomond}

\affiliation{Vrije Universiteit Amsterdam, Universit\'e Paris-Est Cr\'eteil}

\end{aug}

\begin{abstract}
In this paper we propose a general method to derive an upper bound 
for the contraction rate of the posterior distribution for nonparametric 
inverse problems. We present a general theorem that allows us to derive 
contraction rates for the parameter of interest from contraction rates 
of the related direct problem of estimating transformed parameter of interest. 
An interesting aspect of this approach 
is that it allows us to derive contraction rates for priors that are 
not related to the singular value decomposition of the operator. 
We apply our result to several examples of linear inverse problems, 
both in the white noise sequence model and the nonparametric regression model, 
using priors based on 
the singular value decomposition of the operator, location-mixture 
priors and splines prior, and recover minimax adaptive contraction rates. 
\end{abstract}

\begin{keyword}
\kwd{Bayesian nonparametrics, nonparametric inverse problems, posterior distribution, rate of contraction, modulus of continuity}
\end{keyword}

\end{frontmatter}

%%%%%%%%%%%%%%%%%%%%%%%%%%%%%%%%%%%%%%%%%%%%%%%%%%%%%%%%%%%%%%%%%%%%%%%%%%%%%%%%
%
% Article
%
%%%%%%%%%%%%%%%%%%%%%%%%%%%%%%%%%%%%%%%%%%%%%%%%%%%%%%%%%%%%%%%%%%%%%%%%%%%%%%%%
\section{Introduction}

%!TEX root = ../inverse_general_BJ_rev_JB.tex
Statistical approaches to inverse problems have
been initiated in the 1960's and since then many estimation methods have been
developed.  Inverse problems arise naturally when one observes the object 
of interest only indirectly. Mathematically speaking, this
phenomenon is easily modeled by the introduction of an operator $K$ modifying 
the object of interest $f$, such that the observation at hand comes from the model 
\begin{equation} 
	Y^n \sim P_{Kf}^n, 
\label{eq:intro:model} 
\end{equation} 
where $f$ is assumed to belong to a parameter space $\mathcal{F}$, 
and $n \to \infty$ reflects the increasing amount of information in the observation. In many
applications the operator $K$ is assumed to be injective. However, in the most
interesting cases its inverse is not continuous, thus the parameter of
interest $f$ cannot be reconstructed by a simple inversion of the operator. 
Such problems are said to be \emph{ill-posed}.  % 
Several methods dealing with the discontinuity of the inverse operator 
have been proposed in the literature. The most famous one is to
conduct the inference while imposing some regularity constraints on the 
parameter of interest $f$. These so-called
regularization methods have been widely studied in the literature both from a 
theoretical and applied perspective, see \citep{cavalier2008inverse,engl1996regularization} for reviews.

A Bayesian approach to inverse problems is therefore particularly interesting, as it is well
known that putting a prior distribution on the functional parameter yields a natural regularization. This
property of the Bayesian approach is particularly interesting for model choice, but it has proved 
also useful in many estimation procedures, as shown in \citep{Rousseau2011overfitted} in the 
case of overfitted mixtures models,  in \citep{castillo2013bayesian} in the case of nonparametric 
models where regularization is necessary, in \citep{salomond2013concentration} in the semiparametric problem
of estimating a monotone density at the boundaries of its support, or in \citep{KVZ11} in the white noise setting.  

In this paper we study the behaviour of the posterior distribution when 
the amount of information goes to infinity (e.g. when the number of data points $n$ goes to infinity 
or when the level of the noise goes to $0$) under the
frequentist assumption that the data $Y^n$ are generated from model (\ref{eq:intro:model}) for some
true unknown parameter $f_0$.
% \changes
{Asymptotic properties of the 
posterior distribution in nonparametric models have been studied for many years. 
Some first results about consistency of Bayes procedures date back to Schwartz \citep{Schwartz65}. 
Her ideas were further refined and extended in an unpublished work of Barron \citep{Barron88}, 
and can be also found in other works, e.g., \citep{Barron99}, \citep{GGR99}. The next 
natural step is to consider the rate at which the neighborhoods of the truth can shrink, 
yet still capture most of the posterior mass. In other words, the interest lies in finding an upper bound for 
the rate at which the posterior concentrates around $f_0$. This is also the main 
focus of this paper. The aforementioned consistency results served as a starting point 
for two seminal papers on rates of convergence of posterior distributions by Ghosal et al.\ 
\citep{ghosal2000convergence} and Shen and Wasserman \citep{Shen2001}.}
Understanding of the whole posterior distribution is necessary for uncertainty quantification, 
see a recent paper by Szab\'o et al.\ \citep{szabocredible} for an overview, but is also 
directly related to asymptotic properties of Bayes point estimators, 
%\changes
{see, e.g., Theorem~2.5 in \citep{ghosal2000convergence}}. 
In Bayesian nonparametrics it is important 
to understand the impact of the prior distribution on the posterior. 
In particular, some aspects of the prior may be inherited by the posterior when the amount of information 
grows to infinity and may thus be highly influential for the quality and speed of recovery. 

Asymptotic properties of the Bayesian approach to nonparametric linear inverse problems have recently received a growing interest. 
Knapik et al.\ \citep{KVZ11}, Agapiou et al.\ \citep{Agapiou12}, and Florens and Simoni \citep{florens2012regularized} 
were the first to study posterior contraction rates under conjugate prior in the so-called mildly ill-posed setting 
(in the terminology of \citep{cavalier2008inverse}). 
These were followed by two papers by Knapik et al.\ \citep{KVZ13} and Agapiou et al.\ \citep{Agapiou14}, 
studying Bayesian recovery of the initial condition for heat equation and related extremely and 
severely ill-posed inverse problems. One type of priors studied in \citep{KVZ13} leads to a rate-adaptive Bayesian procedure. 
The paper by Ray \citep{ray2013bayesian} was the first study of the posterior contraction rates 
in the non-conjugate sequence setting. 
Considering non-conjugate prior is particularly interesting as it allows some additional 
flexibility of the model. However, the approach presented in \citep{ray2013bayesian} is only valid for priors 
that are closely linked to the \emph{singular value decomposition} (SVD) of the operator. 
Moreover, in \citep{ray2013bayesian} several rate-adaptive priors were considered, 
both in the mildly and severely ill-posed setting. 
It should be noted, however, that some of the bounds on contraction rates in the severely ill-posed 
setting obtained in that paper are not optimal and do not agree with 
the bounds found in \citep{KVZ13} or \citep{Agapiou14}, 
probably due to proof techniques.  
Similar adaptive results, in the conjugate mildly ill-posed setting, using 
empirical and hierarchical Bayes approach were obtained in \citep{KSzVZ12}. 

There is a rich literature on the problem of deriving posterior contraction 
rate in the direct problem setting, i.e. estimating $Kf$ in \eqref{eq:intro:model}. Since the seminal papers of Ghosal et al.\ 
\citep{ghosal2000convergence} and Shen and Wasserman \citep{Shen2001}, general conditions on the
prior distribution for which the posterior contracts at a certain rate have
been derived in various cases. In particular, Ghosal and van~der~Vaart in \citep{GhosalVdv07} 
give a number of conditions for non independent and identically distributed data. However, 
such results cannot be applied directly to ill-posed inverse problems, 
and to the authors' best knowledge, no analogous results exist in the inverse 
problem literature. In this paper we propose a unified general approach 
to posterior contraction in nonparametric inverse problems, and illustrate it for specific linear inverse problems.

To understand why the existing general posterior contraction results are not suited 
for nonparametric inverse problems consider an abstract setting 
in which the parameter space $\mathcal{F}$ is an arbitrary 
metrizable topological vector space and let $K$ be a continuous injective mapping 
$K: \mathcal{F} \ni f \mapsto Kf \in K\mathcal{F}$. Let $d$ and $d_K$ 
denote some metrics or semi-metrics on $\F$ and $K\F$, respectively. 
Any prior $\Pi$ on $f$ imposes a prior on $Kf$ through the continuous mapping $K$. 
Recall that the true parameter of interest $f_0$ belongs to $\F$. 
General posterior contraction results (e.g., in \citep{ghosal2000convergence} 
or \citep{GhosalVdv07}) rely on several natural metrics related to 
the model (\ref{eq:intro:model}) and therefore control the distance 
between $Kf_0$ and $Kf$ in the $d_K$ metric. On the other hand, our 
interest lies in the recovery of $f_0$, and therefore 
the control of the distance between $f_0$ and $f$ in the $d$ metric 
is desirable. Since the operator $K$ does not have a continuous inverse 
and the problem is ill-posed, even if $d_K(Kf, Kf_0)$ is small, 
the distance $d(f, f_0)$ between $f$ and the true $f_0$ can be arbitrarily large. 
In other words, there is no equivalence between the metrics $d$ and $d_K$ 
and therefore the existing theory of posterior contraction does not 
allow obtaining bounds on posterior contraction rates for the 
recovery of $f_0$. 

Even if the problem is ill-posed, there exist subsets $\mathcal{S}_n$ of $\F$ 
such that the inverse of the operator $K$ restricted to $K\mathcal{S}_n$ is 
continuous. We can thus easily 
derive posterior contraction rate for $f \in \mathcal{S}_n$ from 
posterior contraction rate for $Kf$ by inverting the operator $K$. 
For suitably chosen priors, the sets $\mathcal{S}_n$ will capture most 
of the posterior mass, and we can thus extend the contraction result to the whole 
parameter space $\F$. 
%\changes
{The sets $\mathcal{S}_n$, thought of as 
sieves approximating the parameter space $\F$, have already 
been considered in \citep{ghosal2000convergence} allowing 
some additional flexibility and are often incorporated 
in results on posterior contraction for various models. 
However, their principal role was not to enable 
the change of metrics, but rather alleviate the usual entropy 
condition.} 
%
%\changes
{
In our approach we first assume the 
existence of a contraction result for the so-called direct problem 
(that is the recovery of $Kf$) that can be derived using general 
posterior contraction literature. Next, we choose a sequence of 
subsets $\mathcal{S}_n$ in such a way that the inversion of the operator 
$K$ on $K\mathcal{S}_n$, so also the change of metrics, can be controlled and at the same this sets 
are big enough 
(in terms of the posterior mass).} The latter condition 
can be verified by imposing additional sufficient 
conditions on the prior (on $f$). We are then able to show that 
the posterior distribution for the parameter of interest $f$ contracts 
at a given rate. 

The rest of the paper is organized as follows: we present 
the main result in Section \ref{sec:general:th} 
%\changes
{and 
discuss how it relates to other results using the concept of sieves 
to control contraction in other metrics}. 
We then apply our result in various settings. We first consider 
the white noise sequence model in Section~\ref{sec:sequence:model}, 
where we present a general construction of the sets $\mathcal{S}_n$, 
and recover many of the existing results with much less effort. 
We also observe an interesting interplay between optimality of Bayesian 
procedures for estimating $f$ and $Kf$.
In Section \ref{sec:regression} we apply our method in the 
nonparametric inverse regression setting, considering 
new families of priors that need not be related to the SVD, and leading 
to optimal Bayesian procedures. 
Proofs of Sections~\ref{sec:general:th}--\ref{sec:regression} are placed in Section~\ref{sec:proofs}.
We conclude the paper with a discussion in Section~\ref{sec:discussion}.

For two sequences $(a_n)$ and $(b_n)$ of numbers,  $a_n \asymp b_n$ means that
$|a_n/b_n|$ is bounded away from zero and infinity as $n \to \infty$, $a_n
\lesssim b_n$ means that $a_n/b_n$ is bounded, $a_n \sim b_n$ means  that
$a_n/b_n \to 1$ as $n \to \infty$, and $a_n \ll b_n$ means that  $a_n / b_n
\to 0$ as $n \to \infty$. For two real numbers $a$ and $b$,  we denote by $a
\vee b$ their maximum, and by $a \wedge b$ their minimum. For a sequence of
random variables $X^n = (X_1, \dots , X_n) \sim P^n_f$ and any measurable
function $\psi$ with respect to $P_f^n$, we denote by $\E_f \psi$ the
expectation of $\psi(X^n)$ with respect to $P^n_f$ and when $f = f_0$ we will write $\E_0$ instead of 
$\E_{f_0}$. 
% The notation $o_P(1)$ will be used for a random variable that goes to $0$ in $P$-probability.

%%%%%%%%%%%%%%%%%%%%%%%%%%%%%%%%%%%%%%%%%%%%%%%%%%%%%%%%%%%%%%%%%%%%%%%%%%%%%%%%

\section{General theorem}
\label{sec:general:th}
%!TEX root = ../inverse_general_BJ_rev.tex

Assume that the observations $Y^{n}$ come from model (\ref{eq:intro:model}) 
and that $P^n_{Kf}$ admit densities
$p^{n}_{Kf}$ relative to a $\sigma$-finite measure $\mu^{n}$.
To avoid complicated notations, we drop the superscript $n$ in
the rest of the paper. Let $\mathcal{F}$ and $K\mathcal{F}$
be metric spaces, and let $d$ and $d_K$ denote metrics on
both spaces, respectively.

In this section we present the main result of this paper which
gives an upper bound on the posterior contraction rate under
some general conditions on the prior.
We call the estimation of $Kf$ given the observations $Y$
the \emph{direct problem}, and the estimation $f$ given $Y$ the
\emph{inverse problem}. The main idea is to control the change 
of metrics $d_K$ and $d$. If the posterior distribution 
concentrates around $Kf_0$ for the metric $d_K$ at a certain 
rate in the direct problem, applying the change of metrics will 
give us an upper bound on the posterior contraction rate 
for the metric $d$ in the inverse problem. However, since the 
operator $K$ does not posses a continuous inverse, 
the change of metrics cannot be controlled 
over the whole space $K\mathcal{F}$. A way to circumvent this 
issue is to only focus on a sequence of sets of high posterior 
mass for which the change of metric is feasible. More precisely,  
for a set $\mathcal{S}\subset \mathcal{F}$, $f_0 \in \mathcal{F}$ 
and a fixed $\delta > 0$ 
we call the quantity
\begin{equation}
    \omega(\mathcal{S},f_0, d, d_K,\delta)
        := \sup \bigl\{ d(f, f_0): f \in \mathcal{S}, d_K(Kf, Kf_0) \leq \delta \bigr\}
\label{eq:def:modulus}
\end{equation}
the \emph{modulus of continuity}. 
We note that in this definition we do not assume $f_0 \in \mathcal{S}$. 
This is thus a local version of the modulus of continuity considered in 
\citep{donoho1991} or \citep{hoffmann2013adaptive}. 
On the one hand, the sets $\mathcal{S}_n$ need to be 
big enough to capture most of the posterior mass. On the other hand, one 
has to be able to control the distance between the elements of 
$\mathcal{S}_n$ and $f_0$, given the distance between $Kf$ and 
$Kf_0$ is small. Since the operator $K$ is unbounded, this suggests 
that the sets $\mathcal{S}_n$ cannot be too big. 

\begin{theorem}\label{thm:main}
\label{th:main}
Let $\epsilon_n \to 0$  and let $\Pi$ the prior distribution on $f$ be such that
\begin{equation}
	\E_0\Pi\bigl(\mathcal{S}_n^c\, | \, Y^n\bigr) \to 0,
\label{eq:postmass}
\end{equation}
for some sequence of sets $(\mathcal{S}_n)$, $\mathcal{S}_n \subset \mathcal{F}$, 
and for any positive sequence $M_n$
\begin{equation}
    \E_0\Pi\bigl(f: d_K(Kf, Kf_0)\geq M_n\epsilon_n \, | \, Y^n\bigr) \to 0.
\label{eq:mainth:condition:contraction:direct}
\end{equation}
Then
\[
    \E_0\Pi\bigl(f: d(f,f_0) \geq \omega(\mathcal{S}_n,f_0,d, d_K,M_n\epsilon_n)\, | \,  Y^n\bigr) \to 0.
\]
\end{theorem}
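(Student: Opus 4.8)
The plan is to bound the posterior probability of the complement event by splitting it according to whether $f$ lies in $\mathcal{S}_n$ or not, and then exploit the definition of the modulus of continuity on $\mathcal{S}_n$. The key observation is almost tautological: by the definition in (\ref{eq:def:modulus}), if $f \in \mathcal{S}_n$ and $d_K(Kf, Kf_0) \leq M_n\epsilon_n$, then necessarily $d(f,f_0) \leq \omega(\mathcal{S}_n,f_0,d,d_K,M_n\epsilon_n)$. Contrapositively, the event $\{f : d(f,f_0) \geq \omega(\mathcal{S}_n,f_0,d,d_K,M_n\epsilon_n)\}$ forces $f$ to violate at least one of the two defining conditions of the modulus, namely either $f \notin \mathcal{S}_n$ or $d_K(Kf,Kf_0) > M_n\epsilon_n$.

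First I would fix notation by abbreviating $r_n := \omega(\mathcal{S}_n, f_0, d, d_K, M_n\epsilon_n)$ and consider the event $A_n := \{f : d(f,f_0) \geq r_n\}$. The set inclusion I want to establish is
\begin{equation}
    A_n \subset \mathcal{S}_n^c \cup \{f : d_K(Kf, Kf_0) \geq M_n\epsilon_n\}.
\end{equation}
To justify this, suppose $f \in A_n$ but $f \in \mathcal{S}_n$ and $d_K(Kf,Kf_0) < M_n\epsilon_n$; then $f$ satisfies both constraints in the supremum defining $r_n$, so $d(f,f_0) \leq r_n$, contradicting $f \in A_n$ (modulo the boundary case of equality, which is handled either by the nonstrict supremum or by an arbitrarily small adjustment of the threshold). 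Hence every $f \in A_n$ must lie in one of the two sets on the right, proving the inclusion.

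From the inclusion, subadditivity of the posterior probability measure gives, for each fixed $Y^n$,
\begin{equation}
    \Pi(A_n \mid Y^n) \leq \Pi(\mathcal{S}_n^c \mid Y^n) + \Pi\bigl(f : d_K(Kf,Kf_0) \geq M_n\epsilon_n \mid Y^n\bigr).
\end{equation}
Taking the expectation $\E_0$ under the true data-generating measure and using linearity, the right-hand side is bounded by the sum of the two quantities appearing in (\ref{eq:postmass}) and (\ref{eq:mainth:condition:contraction:direct}). Both of these tend to zero by assumption, so $\E_0\Pi(A_n \mid Y^n) \to 0$, which is exactly the claimed conclusion.

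I do not expect a serious obstacle here, since the argument is essentially a decomposition of an event followed by an application of the hypotheses; the only subtlety is the treatment of the strict-versus-nonstrict inequality at the modulus threshold, which is purely bookkeeping. The genuinely substantive content of the theorem resides in its hypotheses rather than the derivation: verifying (\ref{eq:postmass}) for a well-chosen sequence $\mathcal{S}_n$ and establishing the direct-problem contraction (\ref{eq:mainth:condition:contraction:direct}) are where the real work lies in applications, but both are assumed here.
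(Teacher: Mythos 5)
Your proof is correct and follows essentially the same route as the paper's: decompose the bad event over $\mathcal{S}_n$ and $\mathcal{S}_n^c$, use the definition of the modulus of continuity to convert the $d$-event on $\mathcal{S}_n$ into a $d_K$-event, and invoke the two hypotheses. Your explicit handling of the boundary case at the modulus threshold is a minor point the paper glosses over, but it changes nothing of substance.
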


The proof is elementary and can be found in Section \ref{sec:proof:main}. 

The idea behind Theorem~\ref{th:main} is simple 
%\changes
{and was used to change metrics also in direct problems. 
For instance Castillo and van der Vaart \citep{Castillo12} 
considered the multivariate normal mean
model in the situation that the mean vector is sparse.  
They use the fact that the posterior concentrates along 
certain subspaces on which it is easy to control an 
$\ell_q$-like metric with the standard Euclidean 
metric for $q < 2$.
Hoffmann et al.\ \citep{hoffmann2013adaptive} also use concentration 
of the posterior on specific sets to control 
the $L_\infty$ metric with the $L_2$ metric in the white noise model.

Castillo et al.\ \citep{Castillo15} extended the ideas of \citep{Castillo12} 
to the sparse linear regression model, in which the recovery of the 
parameter of the model is an inverse problem. 
Similar reasoning was also used in \citep{vollmer2013posterior} 
to study posterior contraction in the special case of Gaussian 
elliptic inverse problem, and in \citep{donnet2014posterior} to investigate 
asymptotic properties of empirical Bayes procedures for density deconvolution. 
However, these papers consider specific inverse problems only, 
whereas Theorem~\ref{th:main} allows 
deriving contraction rates for a wide variety of inverse problem 
models for which the prior is not necessarily 
related to the spectral decomposition of the operator $K$, e.g., 
when the operator does not admit singular value decomposition, as 
in Section~\ref{sec:splines}.
}

The interpretation of the theorem is the following: given a properly 
chosen sequence of sets $\mathcal{S}_n$, the rate of posterior contraction $M_n\epsilon_n$ 
in the direct problem restricted to the given sequence can be translated 
to the rate of posterior contraction in the inverse setting. 
Note that the sequence $M_n$ is often chosen to grow to infinity 
as slowly as needed (see, e.g., in \citep{ghosal2000convergence} 
or \citep{GhosalVdv07}), making $\epsilon_n$ the effective rate 
of posterior contraction. 
%\changes
{Also, in both contraction results of Section~4, the sequence $M_n$ 
need not be diverging and is chosen to be constant.} 
Since the operator $K$ is injective and continuous, any prior $\Pi$ on $f$ 
induces a prior on $Kf$, and the general posterior contraction results 
can be applied to obtain the rate of contraction in the direct problem 
of estimating $Kf$.

Next, the choice of $\mathcal{S}_n$ is crucial as it is the principal 
component in the control of the change of metric. In particular, the 
contraction rate $M_n\epsilon_n$ for the direct problem may not be 
optimal, and still lead to an optimal contraction rate 
$\omega(\mathcal{S}_n,f_0,d, d_K,M_n\epsilon_n)$ for the inverse 
problem with a well-suited choice of $\mathcal{S}_n$. 
\changes{As shown in Section 3.3, it is possible in some cases to obtain optimal recovery of $f$ without having optimal recovery of $Kf$. In this example, we can choose $S_n$ small enough so that the change of metrics can be control very precisely. This widens the possible choice of priors leading to optimal contraction rates and shows that the change of metric is the crucial part here. However, in most cases, the priors considered in this paper lead to optimal recovery for both $f$ and for $Kf$.}
%As shown in Section \ref{sec:severely}, priors suited for optimal recovery 
%of $f$ need not be optimal for recovery of $Kf$. We can 
%then choose $\mathcal{S}_n$ small enough so that the 
%change of metrics can be controlled very precisely. 
%\changes{[However, other results of this paper show that the optimal 
%prior for $f$ will also be optimal for $Kf$.]}

To control the posterior mass of the sets $\mathcal{S}_n$ we can usually 
alter the proofs of contraction results for the direct problems. Here 
we present a standard argument leading to (\ref{eq:postmass}). 
Define the usual Kullback--Leibler neighborhoods by
\begin{equation}
\begin{split}
    B_n(Kf_0, \epsilon) = \Bigl\{f\in \mathcal{F}:{}
    -\int p_{Kf_0}\log\frac{p_{Kf}}{p_{Kf_0}}\, d\mu &\leq n\epsilon^2,\\
    \int p_{Kf_0}\Bigl(\log\frac{p_{Kf}}{p_{Kf_0}}\Bigr)^2\, d\mu &\leq n\epsilon^2,
    \Bigr\},
\end{split}
\label{eq:main:de:KLneighbour}
\end{equation}
The following lemma adapted from \citep{GhosalVdv07} gives general 
conditions on the prior such that (\ref{eq:postmass}) is satisfied.  
\begin{lemm}[Lemma~1 in \citep{GhosalVdv07}]
Let $\epsilon_n \to 0$ and let $(\mathcal{S}_n)$ be a sequence of sets 
$\mathcal{S}_n \subset \mathcal{F}$. If $\Pi$ is the prior distribution on $f$ satisfying
\[
    \frac{\Pi(\mathcal{S}_n^c)}{\Pi(B_n(Kf_0, \epsilon_n))} \lesssim \exp(-2n\epsilon_n^2),
\]
then
\[
    \E_0\Pi\bigl(\mathcal{S}_n^c\, | \, Y^n\bigr) \to 0.
\]
\label{lem:main}
\end{lemm}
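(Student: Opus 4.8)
The statement to prove is Lemma 1.2 (from Ghosal--van der Vaart): if the prior ratio
$\Pi(\mathcal{S}_n^c)/\Pi(B_n(Kf_0,\epsilon_n)) \lesssim \exp(-2n\epsilon_n^2)$, then
$\E_0\Pi(\mathcal{S}_n^c\,|\,Y^n)\to 0$. Writing the posterior explicitly via Bayes' rule, the plan is to express
\[
    \Pi\bigl(\mathcal{S}_n^c\,|\,Y^n\bigr)
        = \frac{\int_{\mathcal{S}_n^c} \frac{p_{Kf}}{p_{Kf_0}}(Y^n)\,d\Pi(f)}
               {\int_{\mathcal{F}} \frac{p_{Kf}}{p_{Kf_0}}(Y^n)\,d\Pi(f)}
        =: \frac{N_n}{D_n},
\]
where I have divided numerator and denominator by $p_{Kf_0}(Y^n)$. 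The strategy is the classical one: bound the numerator $N_n$ in expectation from above, bound the denominator $D_n$ from below with high probability, and then combine the two on the event where the denominator is large.

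\textbf{Lower bound for the denominator.}
First I would restrict the integral defining $D_n$ to the Kullback--Leibler ball $B_n(Kf_0,\epsilon_n)$, so that $D_n \geq \int_{B_n(Kf_0,\epsilon_n)} \frac{p_{Kf}}{p_{Kf_0}}(Y^n)\,d\Pi(f)$. The key auxiliary fact here is the standard lemma (again Lemma 8.1 in Ghosal--van der Vaart, or its analogue) stating that for any $C>0$,
\[
    P_0\Bigl( D_n \leq \Pi\bigl(B_n(Kf_0,\epsilon_n)\bigr)\,e^{-(1+C)n\epsilon_n^2} \Bigr) \to 0,
\]
which follows from Jensen's inequality applied to the log-likelihood ratio together with the second-moment control built into the definition of $B_n$. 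Since the induced prior on $Kf$ places the mass $\Pi(B_n(Kf_0,\epsilon_n))$ on the relevant neighborhood, this gives a high-probability lower bound on $D_n$ of the form $\Pi(B_n(Kf_0,\epsilon_n))\,e^{-(1+C)n\epsilon_n^2}$. I would fix, say, $C=1$ so the exponent reads $e^{-2n\epsilon_n^2}$, matching the hypothesis.

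\textbf{Upper bound for the numerator and conclusion.}
For the numerator, Fubini's theorem gives $\E_0 N_n = \int_{\mathcal{S}_n^c} \E_0\frac{p_{Kf}}{p_{Kf_0}}(Y^n)\,d\Pi(f) = \Pi(\mathcal{S}_n^c)$, since each likelihood ratio integrates to one. By Markov's inequality, $N_n \leq \Pi(\mathcal{S}_n^c)/\eta$ except on an event of $P_0$-probability at most $\eta$, for any $\eta>0$. Combining the two bounds on the event where $D_n$ is large, I get
\[
    \Pi\bigl(\mathcal{S}_n^c\,|\,Y^n\bigr)
        \leq \frac{\Pi(\mathcal{S}_n^c)/\eta}{\Pi(B_n(Kf_0,\epsilon_n))\,e^{-2n\epsilon_n^2}}
        = \frac{1}{\eta}\cdot\frac{\Pi(\mathcal{S}_n^c)}{\Pi(B_n(Kf_0,\epsilon_n))}\,e^{2n\epsilon_n^2},
\]
and the hypothesis forces the right-hand side to $0$ (for fixed $\eta$, up to the constant in $\lesssim$). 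Finally, since the two exceptional events have vanishing (respectively, arbitrarily small) probability, the bounded quantity $\Pi(\mathcal{S}_n^c\,|\,Y^n)\in[0,1]$ converges to $0$ in $P_0$-probability, hence in $L^1$, giving $\E_0\Pi(\mathcal{S}_n^c\,|\,Y^n)\to 0$.

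\textbf{Main obstacle.}
The only genuinely delicate step is the denominator lower bound: one must verify that the two-moment Kullback--Leibler neighborhood $B_n$, defined in terms of $p_{Kf}$ and $p_{Kf_0}$, is exactly what is needed to run the Jensen/variance argument controlling $D_n$ from below. Everything else is bookkeeping with Fubini and Markov. Since the neighborhoods in (\ref{eq:main:de:KLneighbour}) are phrased in terms of the induced densities on $Kf$, the cleanest route is to observe that the induced prior on $Kf$ together with these neighborhoods reduces the problem verbatim to the direct-problem setting treated in \citep{GhosalVdv07}, so the lemma is really just a transcription of their Lemma~1 through the continuous injective map $K$.
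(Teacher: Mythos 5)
The paper offers no proof of this lemma at all: it is quoted verbatim as Lemma~1 of \citep{GhosalVdv07}, so there is nothing internal to compare against. Your reconstruction is precisely the standard argument behind that citation (split the posterior into $N_n/D_n$, bound $\E_0 N_n=\Pi(\mathcal{S}_n^c)$ by Fubini, lower-bound $D_n$ by $\Pi(B_n(Kf_0,\epsilon_n))e^{-2n\epsilon_n^2}$ on a high-probability event via the Jensen/Chebyshev lemma, and combine), and it is correct. The one point worth flagging is that the denominator step gives $P_0\bigl(D_n\leq \Pi(B_n(Kf_0,\epsilon_n))e^{-2n\epsilon_n^2}\bigr)\lesssim (n\epsilon_n^2)^{-1}$, so the conclusion needs $n\epsilon_n^2\to\infty$; this hypothesis is left implicit both in your write-up and in the paper's statement, but it holds in every application in the paper, so it is a cosmetic rather than a substantive gap.
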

For clarity of presentation the results in this section are stated 
for a fixed $f_0$, but we note that they are easily extended to 
uniform results over certain sets, i.e., balls of fixed radius and regularity, 
or union of balls of fixed radii over compact range of regularity parameter (see 
results of Section~\ref{sec:sequence:model}).

%%%%%%%%%%%%%%%%%%%%%%%%%%%%%%%%%%%%%%%%%%%%%%%%%%%%%%%%%%%%%%%%%%%%%%%%%%%%%%%%

\section{Sequence white noise model}
\label{sec:sequence:model}

Our first examples are based on the well-studied infinite-dimensional 
normal mean model. In the Bayesian context the problem of direct 
estimation of infinitely many means has been studied, among others, in 
\citep{Zhao,  Shen2001, Belitser, GhosalVdv07}.  

We consider the white noise setting, where we observe an infinite 
sequence $Y^n=(Y_1, Y_2, \ldots)$ satisfying 
\begin{equation}
    Y_i = \kappa_if_i + \frac{1}{\sqrt{n}}Z_i,
\label{eq:modelWN}
\end{equation}
where $Z_1, Z_2, \ldots$ are independent standard normal random variables, 
$f = (f_1, f_2, \ldots) \in \ell_2$ is the infinite-dimensional parameter of 
interest and $(\kappa_i)$ is a known sequence that may converge to $0$ as 
$i \to \infty$. If this is the case (so 
when the operator $K$ does not possess a continuous inverse) the modulus of continuity defined 
in (\ref{eq:def:modulus}) is infinite when $\mathcal{S} = \mathcal{F}$. 

Even though this model is rather abstract, it is mathematically tractable 
and it enables rigorous results and proofs. Moreover, it can be seen as 
an idealized version of other statistical models through equivalence 
results 
%\changes
{see, e.g., \citep{Meister11, Nussbaum96, Brown96}}. 
Both white noise examples of inverse problems 
presented in this section have already 
been studied in the Bayesian literature. 
We present them here for several reasons. 
First, the direct version of the normal mean model attracted a lot of attention 
in the Bayesian literature, e.g. providing contraction results for estimation of $Kf$ 
in the mildly ill-posed setting. Therefore, we choose this example to illustrate 
how Theorem~\ref{th:main} works in practice. In particular, it allows us to make 
it clear how one could construct a sequence of sets $\mathcal{S}_n$.
In the severely ill-posed case we study truncated (or sieve) priors leading 
to optimal recovery of the parameter of interest. Our results improve 
the findings of \citep{KVZ13} and \citep{Agapiou14}. 
In addition, we can show that optimal contraction 
for $f$ does not necessarily require optimal recovery of $Kf$.

%to show that posterior contraction in nonparametric inverse problems can be 
%obtained in an elementary way (in contrast to, e.g., \citep{KVZ11}, 
%\citep{florens2012regularized}, or \citep{ray2013bayesian}), and to show that priors for $f$ do not need 
%to lead to optimal rates in the direct problem, and still yield 
%optimal recovery in the inverse problem. 

\subsection{Computation of a modulus}
\label{sec:modulus}
%!TEX root = ../Inv_notes3.tex

In this section we first present an example of the sequence 
of sets $\mathcal{S}_n$, and later present how the modulus of 
continuity for this sequence can be computed in a standard 
inverse problem setting. We now suppose that $\mathcal{F}$ and 
$K\mathcal{F}$ are separable Hilbert spaces, 
denoted $(\H_1, \|\cdot\|_{\H_1})$ and $(\H_2,\|\cdot\|_{\H_2})$ 
respectively. We note that the sets $\mathcal{S}_n$ resemble 
the sets $\mathcal{P}_n$ considered in \citep{ray2013bayesian}.

As already noted, the operator $K$ restricted to certain subsets of 
the domain $\H_1$ might have a finite modulus of continuity 
defined in (\ref{eq:def:modulus}). Clearly, one wants to construct 
a sequence of sets $\mathcal{S}_n$ that in a certain sense approaches 
the full domain $\H_1$. This is understood in terms of the 
remaining prior mass condition in Theorem~\ref{thm:main}. Moreover, since 
we do not require $f_0$ to be in $\mathcal{S}_n$, we need to be able to 
control the distance between $f_0$ and $\mathcal{S}_n$.  

A natural guess is to consider finite-dimensional projections of $\H_1$. 
In this section we go beyond this concept. To get some intuition, 
consider the Fourier basis of $\H_1$. The ill-posedness can be 
then viewed as too big an amplification of the high frequencies 
through the inverse of the operator $K$. Therefore, one wants 
to control the higher frequencies in the signal, and thus in 
the parameter $f$. 

Since $\H_1$ is a separable Hilbert space, there exist an 
orthonormal basis $(e_i)$ and each element $f \in \H_1$ 
can be viewed as an element of $\ell_2$ and
\[
    \un{f} = \sum_{i=1}^\infty f_i^2.
\]
For given sequences of positive numbers $k_n \to \infty$ and $\rho_n \to 0$, and a constant $c \geq 0$ we define
\begin{equation}
\label{eq:Sn}
    \mathcal{S}_n := 
       \Bigl\{f\in\ell_2: \sum_{i>k_n} f_i^2 \leq c\rho_n^2\Bigr\}.
\end{equation}

If the operator $K$ is compact, then the spectral decomposition of the 
self-adjoint operator $K^TK:\H_1 \to \H_1$ provides 
a convenient orthonormal basis. In the compact case the operator 
$K^TK$ possesses countably many positive eigenvalues $\kappa_i^2$ 
and there is a corresponding orthonormal basis $(e_i)$ of $\H_1$ 
of eigenfunctions, and the sequence $(\tilde e_i)$ defined by 
$Ke_i = \kappa_i\tilde e_i$ forms an orthonormal conjugate basis of the 
range of $K$ in $\H_2$. Therefore, both $f$ and $Kf$ can be associated 
with sequences in $\ell_2$. Since the problem is ill-posed when $\kappa_i \to 0$, 
we can assume without loss of generality that the sequence $\kappa_i$ is decreasing.

Let $k_n$, $\rho_n$, and $c$ in the definition of $\mathcal{S}_n$ be fixed. Then for any 
$g \in \mathcal{S}_n$
\begin{align*}
    \un{g}^2 &= \sum_{i=1}^\infty g_i^2 = \sum_{i \leq k_n}g_i^2  + \sum_{i > k_n}g_i^2 \\
       &\leq  \sum_{i \leq k_n}g_i^2 +c\rho_n^2 = \sum_{i \leq k_n}\kappa_i^{-2}\kappa_i^2 g_i^2 +c\rho_n^2\\
       &\leq  \kappa_{k_n}^{-2}  \sum_{i \leq k_n}\kappa_i^2 g_i^2 +c\rho_n^2 \leq \kappa_{k_n}^{-2}\deux{Kg}^2 +c\rho_n^2.
\end{align*}

Let $f_{n}$ be the projection of $f_0$ on the first $k_n$ coordinates, i.e., 
$f_{n,i} = f_{0,i}$ for $i \leq k_n$ and $0$ otherwise. Moreover, we assume 
that $f_0$ belongs to some smoothness class described by a decreasing sequence 
$(s_i)$:
\[
	\|f_0\|^2_s = \sum_{i=1}^\infty s_i^{-2}f_{0,i}^2 < \infty.
\]
For instance, the usual Sobolev space of regularity $\beta$ is defined in that way with 
$s_i = i^{-\beta}$. Therefore, we have
\[
	\|f_n-f_0\|_{\H_1} \leq s_{k_n}\|f_0\|_s, \qquad 
	\|Kf_n-Kf_0\|_{\H_2} \leq s_{k_n}\kappa_{k_n}\|f_0\|_s.
\]
Using the triangle inequality twice and keeping in mind that $f-f_n \in \mathcal{S}_n$ 
we obtain
\begin{align}
	\|f-f_0\|_{\H_1} &\leq \|f-f_n\|_{\H_1} + \|f_n-f_0\|_{\H_1} \nonumber \\
		& \leq \kappa_{k_n}^{-1}\|Kf-Kf_n\|_{\H_2} + \sqrt{c}\rho_n + s_{k_n}\|f_0\|_s \nonumber\\
		& \leq \kappa_{k_n}^{-1}\bigl(\|Kf-Kf_0\|_{\H_2} + \kappa_{k_n}s_{k_n}\|f_0\|_s\bigr) 
			+ \sqrt{c}\rho_n + s_{k_n}\|f_0\|_s \nonumber\\
		& = \kappa_{k_n}^{-1}\|Kf-Kf_0\|_{\H_2} + \sqrt{c}\rho_n + 2\|f_0\|_s s_{k_n} \label{eq:seq:ineq:modulus}.
\end{align}
We then find an upper bound for the modulus of continuity with this specific choice of $\mathcal{S}_n$ is
\begin{equation}
\label{eq:mod}
    \omega(\mathcal{S}_n,f_0, \|\cdot\|_{\H_1}, \|\cdot\|_{\H_2},\delta) \lesssim \kappa_{k_n}^{-1}\delta + \rho_n + s_{k_n}.
\end{equation}

% \begin{remark}
% If $c>0$, then $f_0 \in \mathcal{S}_n$ for $n$ large enough (depending on $f_0$). 
% \end{remark}

%%%%%%%%%%%%%%%%%%%%%%%%%%%%%%%%%%%%%%%%%%%%%%%%%%%%%%%%%%%%%%%%%%%%%%%%%%%%%%%%

\subsection{Mildly ill-posed problems}

%!TEX root = ../inverse_general_BJ_rev.tex

In this section we consider the model (\ref{eq:modelWN}), 
where $C^{-1}i^{-p} \leq \kappa_i \leq Ci^{-p}$ for some $p \geq 0$ and $C\geq 1$. 
Since the $\kappa_i$'s decay polynomially, the operator is \emph{mildly} ill-posed. 
Such problems are well studied in the frequentist literature, and 
we refer the reader to \citep{cavalier2008inverse} for a comprehensive overview. There are 
also several papers on properties of Bayes procedures for such problems. The first studies 
of posterior contraction in mildly ill-posed operators were obtained in 
\citep{KVZ11} and \citep{Agapiou12}. Later, adaptive priors leading to the optimal 
minimax rate of contraction 
%\changes
{(up to slowly varying factors)} 
were studied in \citep{ray2013bayesian} and \citep{KSzVZ12}. 
Similar problem, with a different noise structure, has been studied in \cite{florens2012regularized}. 
The main purpose of this section is to show how Theorem 2.1 can be applied to such 
problems and how existing results on contraction rates for $Kf$ in the sequence setting 
can be used to obtain posterior contraction rates for $f$ 
without explicit computations 
as in aforementioned papers.

We put a product prior on $f$ of the form
\[
\Pi = \bigotimes_{i=1}^{\infty} N(0, \lambda_i),
\]
where $\lambda_i = i^{-1-2\alpha}$, for some $\alpha > 0$.
Furthermore, the true parameter $f_0$ is assumed to belong to $S^\beta$ for some
$\beta > 0$:
\begin{equation}
\label{eq:Sb}
    S^\beta = \Bigl\{f\in \ell_2: \|f\|_\beta^2 := \sum f_{i}^2i^{2\beta} < \infty\Bigr\}.
\end{equation}
Therefore, $\|Kf_0\|_{\beta+p}^2$ is finite, the prior on $f$ induces the 
prior on $Kf$ such that $(Kf)_i \sim N(0, \lambda_i\kappa_i^2)$, and one can deduce from 
the results of \citep{Zhao} and \citep{Belitser} that 
\[
    \sup_{\|Kf_0\|_{\beta+p} \leq R}\E_0\Pi\bigl(f: \|Kf-Kf_0\| \geq M_n n^{-\frac{(\alpha \wedge \beta)+p}{1+2\alpha+2p}}\bigm|Y^n\bigr)\to 0.
\]

In order to apply Theorem~\ref{thm:main} we need to construct the sequence 
of sets $\mathcal{S}_n$ and verify condition (\ref{eq:postmass}). We use the construction as in (\ref{eq:Sn}), 
and we verify the remaining posterior mass condition along the lines of 
Lemma~\ref{lem:main}. 

\begin{theorem}
Suppose the true $f_0$ belongs to $S^\beta$ for $\beta > 0$. 
Then for every $R > 0$ and $M_n \to \infty$
\[
    \sup_{\|f_0\|_\beta \leq R}
	  \E_0\Pi\bigl(f: \|f-f_0\| \geq M_nn^{-\frac{(\alpha \wedge \beta)}{1+2\alpha+2p}}\bigm|Y^n\bigr)\to 0. 
\]
\label{th:mildly}
\end{theorem}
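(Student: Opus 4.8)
The plan is to apply Theorem~\ref{thm:main} with $d=d_K=\|\cdot\|$ the $\ell_2$-norm and with the sets $\mathcal{S}_n$ of~(\ref{eq:Sn}). Condition~(\ref{eq:mainth:condition:contraction:direct}) is precisely the direct-problem contraction statement quoted just above the theorem, so it holds with $\epsilon_n=n^{-((\alpha\wedge\beta)+p)/(1+2\alpha+2p)}$; moreover $\|Kf_0\|_{\beta+p}\asymp\|f_0\|_\beta$, so the uniformity over $\|f_0\|_\beta\le R$ is inherited from the uniformity in that quoted result. It then remains to (i) choose the free parameters $k_n$, $\rho_n$, $c$ so that the modulus bound~(\ref{eq:mod}) reproduces the target rate, and (ii) verify the remaining-mass condition~(\ref{eq:postmass}) through Lemma~\ref{lem:main}.

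For (i) I would substitute the mildly ill-posed scaling $\kappa_{k_n}^{-1}\asymp k_n^{p}$ and the Sobolev weights $s_{k_n}=k_n^{-\beta}$ into~(\ref{eq:mod}), giving $\omega(\mathcal{S}_n,f_0,\|\cdot\|,\|\cdot\|,M_n\epsilon_n)\lesssim k_n^{p}M_n\epsilon_n+\rho_n+k_n^{-\beta}$. Choosing $k_n\asymp n^{1/(1+2\alpha+2p)}$ balances the first and third terms: the first becomes $M_n n^{-(\alpha\wedge\beta)/(1+2\alpha+2p)}$ while the third is $n^{-\beta/(1+2\alpha+2p)}$, at most of the target order in both regimes $\beta\le\alpha$ and $\beta>\alpha$. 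Taking $\rho_n\asymp n^{-(\alpha\wedge\beta)/(1+2\alpha+2p)}$ keeps the middle term of the right order, so $\omega\lesssim M_n n^{-(\alpha\wedge\beta)/(1+2\alpha+2p)}$. Since $M_n\to\infty$ is arbitrary, applying the theorem with $M_n$ replaced by $M_n/C$ (still diverging, $C$ the hidden constant) turns its conclusion into the stated bound.

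For (ii) I would bound the ratio $\Pi(\mathcal{S}_n^c)/\Pi(B_n(Kf_0,\epsilon_n))$ of Lemma~\ref{lem:main}. In the white noise model~(\ref{eq:modelWN}) the Kullback--Leibler divergence and its variance are both of order $n\|Kf-Kf_0\|^2$, so the neighbourhood~(\ref{eq:main:de:KLneighbour}) contains $\{f:\|Kf-Kf_0\|\lesssim\epsilon_n\}$; as the induced prior on $Kf$ is the centred Gaussian with coordinate variances $\lambda_i\kappa_i^2\asymp i^{-1-2\alpha-2p}$ and $Kf_0$ has smoothness $\beta+p$, the standard small-ball (concentration function) estimate for diagonal Gaussian priors yields $\Pi(B_n(Kf_0,\epsilon_n))\gtrsim\exp(-Cn\epsilon_n^2)$. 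For the numerator, under $\Pi$ the sum $\sum_{i>k_n}f_i^2$ is distributed as the weighted $\chi^2$ sum $\sum_{i>k_n}\lambda_i\xi_i^2$ with $\xi_i$ i.i.d.\ $N(0,1)$ and mean $\asymp k_n^{-2\alpha}$, so $\Pi(\mathcal{S}_n^c)$ is the probability that this sum exceeds $c\rho_n^2$; a Laurent--Massart type deviation inequality bounds it by $\exp(-\tfrac12\min(\rho_n^4/\textstyle\sum_{i>k_n}\lambda_i^2,\ \rho_n^2/\max_{i>k_n}\lambda_i))$.

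The main obstacle is to make this last exponent dominate $2n\epsilon_n^2+Cn\epsilon_n^2$. A short computation with $\sum_{i>k_n}\lambda_i^2\asymp k_n^{-1-4\alpha}$ and $\max_{i>k_n}\lambda_i\asymp k_n^{-1-2\alpha}$ shows that, for the chosen $k_n$ and $\rho_n$, both arguments of the minimum are $\gtrsim n\epsilon_n^2$ with an implied constant that grows with $c$ (the controlling linear term equals $\rho_n^2 k_n^{1+2\alpha}\asymp n\epsilon_n^2$ up to a factor proportional to $c$). Taking $c$ large enough therefore gives $\Pi(\mathcal{S}_n^c)\lesssim\exp(-2n\epsilon_n^2)\Pi(B_n(Kf_0,\epsilon_n))$, so~(\ref{eq:postmass}) follows from Lemma~\ref{lem:main}. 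Since every estimate depends on $f_0$ only through $\|f_0\|_\beta\le R$, they are uniform over the indicated ball, and Theorem~\ref{thm:main} delivers the claim.
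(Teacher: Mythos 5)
Your proof is correct and follows essentially the same route as the paper: the same choices of $k_n$, $\rho_n$, $\epsilon_n$, the same modulus bound from (\ref{eq:mod}), and verification of the remaining-mass condition through Lemma~\ref{lem:main} by balancing $\rho_n^2k_n^{1+2\alpha}$ against $n\epsilon_n^2$ and taking $c$ large. The only cosmetic difference is that you invoke a Laurent--Massart deviation bound for the weighted $\chi^2$ tail where the paper's Lemma~\ref{lem:mildlypriormass} runs the Chernoff/moment-generating-function argument directly, and you cite the standard Gaussian small-ball estimate where the paper proves it as Lemma~\ref{lem:mildlyKL}; both yield the same exponents.
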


The proof of this theorem is postponed to Section \ref{sec:proof:mildly}.  

The upper bound on the posterior contraction rate obtained 
in this theorem agrees with the ones already obtained in 
the existing literature (see, for instance, \citep{KVZ11,KSzVZ12, ray2013bayesian}). 
We note that the prior used above requires the knowledge of the true 
regularity parameter $\beta$ in order to achieve minimax optimal rate 
of recovery. Moreover, we note that the prior with $\alpha = \beta$ leads 
to optimal recovery of both $f$ and $Kf$. 

%\changes
{The prior used in this section is rather simple and is 
not hierarchical, i.e., is not aimed at adaptive recovery. 
We have already pointed out 
that \citep{ray2013bayesian} and \citep{KSzVZ12} studied adaptive 
Bayesian approach to mildly ill-posed inverse problems and obtained 
optimal rates (up to logarithmic factors).
We would also like to 
point out that recent studies \changes{of adaptive approaches to 
the sequence white noise model \citep[e.g.][]{Arbel13,KSzVZ12}
 already consider its inverse version 
(i.e., allowing $\kappa_i \neq 1$).
In a recent work Belitser \cite{Belitser2016} even obtained adaptive posterior contraction rate
in a setting equivalent to the one considered here that could be used both for the estimation of $Kf$ and 
the estimation of $f$.
}
Therefore, even though one could consider the existing approaches studied in the literature to achieve adaptation (by first showing optimal 
contraction for $Kf$ and then applying Theorem~\ref{th:main} to 
prove contraction for $f$), this will not be treated here 
for the sake of simplicity (in the latter cases also to avoid rather artificial 
application of Theorem~\ref{th:main}). 
}

\subsection{Severely and extremely ill-posed problems}
\label{sec:severely}

%!TEX root = ../inverse_general_BJ_rev_JB.tex

%In this section we consider the white noise setting with truncated
%Gaussian priors. The main purpose of this part is to show that 
%in some classes of ill-posed problems adaptation does not need to be 
%achieved simultaneously in both direct and indirect problems.
%As a matter of fact, in this part the rates in the direct problem will be much (polynomially)
%slower than the optimal rates. This is mostly due to the fact that we consider in here
%severely and extremely ill-posed problems that yield logarithmic rates of recovery. 
%See also \citep{KVZ13} and \citep{Agapiou14} for examples and references.

We again consider the sequence white noise setting, where we observe an infinite 
sequence $Y^n=(Y_1, Y_2, \ldots)$ as in (\ref{eq:modelWN}) 
where $\kappa_i \asymp \exp(-\gamma i^p)$ for some $p \geq 1$ and $\gamma > 0$. 
We first consider estimation of $Kf_0$ that will be later used to obtain the rate
of contraction of the posterior around $f_0$. We put a product prior on $f$ of
the form
\[
\Pi = \bigotimes_{i=1}^{k_n} N(0, \lambda_i),
\]
where $\lambda_i = i^{-\alpha}\exp(-\xi i^p)$, for $\alpha \geq 0$, 
$\xi > 0$, and some $k_n \to \infty$. 
We choose $k_n$ solving 
$1 = n\lambda_i\exp(-2\gamma i^p) = ni^{-\alpha}\exp(-(\xi+2\gamma)i^p)$. 
Using the Lambert function $W$ one can show that
\begin{equation}
\label{eq:k_n}
    k_n = \Bigl(\frac{\alpha}{p(\xi+2\gamma)}
            W\Bigl(n^{\frac{p}{\alpha}}\frac{p(\xi+2\gamma)}{\alpha}\Bigr)\Bigr)^{1/p}
        = \Bigl(\frac{\log n}{\xi+2\gamma} + O(\log\log n)\Bigr)^{1/p},
\end{equation}
see also Lemma A.4. in \citep{KVZ13}. Note that in this case we have 
$\exp(k_n^p) = (nk_n^{-\alpha})^{1/(\xi+2\gamma)}$, so we can avoid 
exponentiating $k_n$. Therefore, we do not have to 
specify the constant in front of the $\log\log n$ term 
in the definition of $k_n$, and we may assume that $k_n$ is of the 
order $(\log n)^{1/p}$.

Note that the hyperparameters of the prior do not depend on $f_0$, 
but only on $K$, which is known. For 
$\mathcal{S}_n$ as in (\ref{eq:Sn}) 
with $k_n$ as above and $c = 0$, the prior is supported on $\mathcal{S}_n$ 
and the first condition of Theorem~\ref{thm:main} is trivially satisfied.
Regardless of the choice of $\xi$ and $\alpha$ (as long as $\alpha \geq 0$ and 
$\xi > 0$) the following theorem shows 
that the posterior contracts at
the optimal minimax rate $(\log n)^{-\beta/p}$ 
for the inverse problem of estimating $f_0$ 
(cf.\ \citep{KVZ13} or \citep{Agapiou14} and references therein), 
so the prior is rate-adaptive.

In this section we consider deterministically 
truncated Gaussian priors.
Similar priors in the extremely ill-posed setting are considered in 
\citep{ray2013bayesian}, but in this paper the truncation level
is endowed with a hyper-prior and the bound on the posterior contraction 
is suboptimal. Other papers on Bayesian approach to severely 
and extremely ill-posed inverse problems do not consider truncated priors. 
In \citep{KVZ13} the 
optimal rate is achieved for the priors with exponentially decaying 
or polynomially decaying variances (in the latter case the speed of 
decay leading to optimal rate is closely related to the regularity of 
the truth). Similar results for the priors with polynomially decaying variances 
are presented in \citep{ray2013bayesian} and \citep{Agapiou14}. 
However, in the former case the rate for undersmoothing priors is 
worse than the rate obtained in the other papers. 

\begin{theorem}
Suppose the true $f_0$ belongs to $S^\beta$ for $\beta > 0$. 
Then for every $R > 0$ and $M_n \to \infty$
\[
    \sup_{\|f_0\|_\beta \leq R}
	  \E_0\Pi\bigl(f: \|f-f_0\| \geq M_n(\log n)^{-\frac{\beta}{p}}\bigm|Y^n\bigr)\to 0.
\]
\label{th:WN:extremely}
\end{theorem}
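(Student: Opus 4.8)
The plan is to apply Theorem~\ref{thm:main} with the sets $\mathcal{S}_n$ from (\ref{eq:Sn}) taken with $c=0$, so that $\mathcal{S}_n = \{f : f_i = 0 \text{ for } i > k_n\}$ is the $k_n$-dimensional coordinate subspace. Since the prior $\Pi = \bigotimes_{i=1}^{k_n} N(0,\lambda_i)$ is supported on $\mathcal{S}_n$, the remaining-mass condition (\ref{eq:postmass}) holds trivially, as already noted before the statement. The only two remaining tasks are therefore (i) to establish the direct contraction (\ref{eq:mainth:condition:contraction:direct}) at a suitable rate $\epsilon_n$ for the metric $\|\cdot\|$ on $K\mathcal{F}$, and (ii) to evaluate the modulus $\omega(\mathcal{S}_n, f_0, \|\cdot\|, \|\cdot\|, M_n\epsilon_n)$ via the bound (\ref{eq:mod}).

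For the direct problem I would exploit conjugacy, as in the mildly ill-posed case. The prior on $f$ induces on $\theta := Kf$ the truncated Gaussian product $\theta_i \sim N(0,\tau_i)$ with $\tau_i = \lambda_i\kappa_i^2 \asymp i^{-\alpha}\exp(-(\xi+2\gamma)i^p)$, and since $Y_i = \theta_i + n^{-1/2}Z_i$ this is an ordinary conjugate Gaussian sequence model whose posterior is $N\!\bigl(\tfrac{n\tau_i}{n\tau_i+1}Y_i, \tfrac{\tau_i}{n\tau_i+1}\bigr)$ for $i\le k_n$ and degenerate at $0$ for $i>k_n$. The calibration $n\tau_{k_n}\asymp 1$ built into the choice (\ref{eq:k_n}) of $k_n$ keeps all shrinkage factors $n\tau_i/(n\tau_i+1)$ bounded and the total posterior spread $\sum_{i\le k_n}\tau_i/(n\tau_i+1)$ of order $k_n/n$. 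Splitting the contraction of $\|\theta-\theta_0\|$ into the variance terms (each of order $k_n/n$) and the squared bias, I would show, uniformly over $\|f_0\|_\beta\le R$, that the posterior of $\theta$ concentrates on $\{\|\theta-\theta_0\|\le M_n\epsilon_n\}$ with $\epsilon_n\asymp \kappa_{k_n}k_n^{-\beta}$. The decisive contribution is the bias from the discarded tail, $\sum_{i>k_n}\kappa_i^2 f_{0,i}^2$, which I would bound by $\kappa_{k_n}^2 k_n^{-2\beta}\|f_0\|_\beta^2$ using that $\kappa_i$ is decreasing and $i^{-2\beta}\le k_n^{-2\beta}$ for $i>k_n$.

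With the direct rate in hand, task (ii) is immediate from Section~\ref{sec:modulus}. Taking $c=0$ and $s_i=i^{-\beta}$ in (\ref{eq:mod}) gives $\omega(\mathcal{S}_n, f_0, \|\cdot\|, \|\cdot\|, M_n\epsilon_n)\lesssim \kappa_{k_n}^{-1}M_n\epsilon_n + k_n^{-\beta}$. Substituting $\epsilon_n\asymp\kappa_{k_n}k_n^{-\beta}$ collapses the first term to $M_n k_n^{-\beta}$, and since (\ref{eq:k_n}) yields $k_n\asymp(\log n)^{1/p}$ both terms are of order $M_n(\log n)^{-\beta/p}$. Theorem~\ref{thm:main} then delivers the claimed bound, and because every constant in the argument depends on $f_0$ only through $\|f_0\|_\beta\le R$, the statement holds uniformly over the Sobolev ball after a harmless redefinition of $M_n$.

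The hard part is the direct-problem rate, and specifically making the inverted rate $\kappa_{k_n}^{-1}\epsilon_n$ land exactly on $s_{k_n}=k_n^{-\beta}$. Because the operator is severely ill-posed, $\kappa_{k_n}^{-1}\asymp\exp(\gamma k_n^p)$ is polynomially large in $n$, so any slack in $\epsilon_n$ is amplified: the crude tail bound $\sum_{i>k_n}\kappa_i^2 f_{0,i}^2\le\kappa_{k_n}^2\|f_0\|^2$ would only yield $\kappa_{k_n}^{-1}\epsilon_n\asymp 1$ and must be sharpened to the $k_n^{-2\beta}$ estimate above. One must likewise check that the inverted posterior spread $\kappa_{k_n}^{-1}\sqrt{k_n/n}$ does not dominate $k_n^{-\beta}$, which is where the precise interplay between the calibration $n\tau_{k_n}\asymp 1$ and the prior exponents $(\xi,\gamma)$ enters. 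All remaining steps — the conjugate posterior computation, the uniform control over $\|f_0\|_\beta\le R$, and the plug-in into (\ref{eq:mod}) — are elementary once the bias is controlled.
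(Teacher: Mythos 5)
Your proposal follows the paper's proof essentially step for step: the same truncated sets $\mathcal{S}_n$ with $c=0$ (so that (\ref{eq:postmass}) is trivial), the same conjugate Gaussian computation splitting the direct-problem risk into squared bias, posterior spread and variance of the posterior mean, the same tail bound $\sum_{i>k_n}\kappa_i^2f_{0,i}^2\leq \kappa_{k_n}^2k_n^{-2\beta}\|f_0\|_\beta^2$, and the same substitution of $\epsilon_n\asymp\kappa_{k_n}k_n^{-\beta}$ into (\ref{eq:mod}). The spread check you flag at the end is precisely what Lemma~\ref{lem:inequalities} supplies ($\sum_{i\leq k_n}s_{i,n}\asymp\sum_{i\leq k_n}t_{i,n}\asymp n^{-1}(\log n)^{1/p}$, absorbed into $\epsilon_n^2\asymp n^{-2\gamma/(\xi+2\gamma)}$ times a power of $\log n$), so your sketch matches the published argument in every essential respect.
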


The proof of this Theorem is postponed to Section \ref{sec:proof:severly}. 
The prior considered in this theorem might seem unnatural, since $\lambda_i$'s 
do not coincide with the type of regularity of the truth and 
the prior puts mass only on analytic functions of growing 
complexity. However, similar approaches are quite common in the Bayesian literature, 
for instance when finite mixtures models are considered. Moreover, 
this prior has also some computational advantages, since the corresponding 
posterior can be handled numerically. 

Inspection of the proof shows that the deterministic truncation 
is suboptimal for the estimation of $Kf_0$, since the resulting upper bound 
is polynomially slower than the minimax rate $n^{-1/2}(\log n)^{1/2p}$. 
It sheds light on an interesting, although counterintuitive property 
of the Bayesian approach to inverse problems: one may not need optimal 
contraction for the estimation of $Kf_0$ to get optimal contraction 
for the estimation of $f_0$. This phenomenon should be interpreted in the 
following way: since the operator $K$ regularizes the parameter $f_0$, one 
could compensate the suboptimal contraction of the posterior 
for the direct problem, by a sharper control of the deviation between 
$f$ and $f_0$ in \eqref{eq:seq:ineq:modulus} when $f$ is in $\mathcal{S}_n$. 
Indeed, when $\xi$ increases (which slows down the upper bound on the 
posterior contraction for $Kf_0$), the truncation level $k_n$ decreases. 
As a result, the sets $\mathcal{S}_n$ become smaller, so the sharper control 
of $d(f,f_0)$ is indeed possible. In the specific setting of sequence white noise 
model it might seem artificial. However, this observation could prove useful 
in more complex settings, especially because it widens the 
class of possible prior distributions giving optimal contraction rates.

\begin{remark}  
If an upper bound $\widetilde \beta$ on the regularity of the true $f_0$ 
is known, one can also take $\xi = 0$ and $\alpha \geq 1+2\widetilde \beta$ 
and the assertion of Theorem~\ref{th:WN:extremely} stays valid. In this case 
the upper bound on the posterior contraction rate for $Kf_0$ 
is logarithmically slower than the minimax rate. 
\end{remark}

\section{Regression}
\label{sec:regression}
%!TEX root = ../inverse_general_BJ_rev.tex
We now consider the inverse regression model with Gaussian residuals 
\begin{equation}
	Y_i = Kf(x_i) + \sigma \epsilon_i , 
	\quad \epsilon_i \overset{iid}{\sim} \mathcal{N}(0,1), 
	\quad i = 1, \ldots, n, 
	\label{eq:regression:model}
\end{equation}
where the covariates $x_i$ are fixed in a covariate space $\mathcal{X}$. 
In the sequel, we either choose $\mathcal{X} = [0,1]$ or $\mathcal{X} = \R$. 
In the following we consider the noise level $\sigma>0$ to be known although 
one could also think of putting a prior on it and estimate it in the direct model. 
Nonparametric regression models have been studied in the literature for 
direct problems, and frequentist properties of the posterior distribution 
are well known for a wide variety of priors. In \citep{GhosalVdv07}, 
Ghosal and van~der~Vaart give general conditions on the prior such that 
the posterior contracts at a given rate. Nonparametric inverse regression models 
are also used in practice, for instance in econometrics where one considers 
instrumental variable as in \citep{Florens2012458}. However, to the authors' best 
knowledge, contraction rates for these models have only been 
considered in \citep{vollmer2013posterior}.

In this setting, a common choice for the metrics $d$ and $d_K$ are the usual $l_2$ norms
\[
	d(f,g)^2 = n^{-1} \sum_{i=1}^n (f(x_i) - g(x_i))^2 = \|f-g\|_n^2, \quad d_K(f,g) = d(Kf,Kg). 
\]
For $a\in\R^k$, $k \in \mathbb{N}^*$, and $f \in L_2$, we denote the standard Euclidean and $L_2$ norms by 
\[
	\|a\|_k = \Bigl(\sum_{i=1}^k a_i^2\Bigr)^{1/2}, \qquad
	\|f\| = \Bigl(\int f^2\Bigr)^{1/2},
\]
respectively. 

We now consider two examples of inverse regression problems, 
namely \emph{numerical differentiation} and \emph{deconvolution on $\R$}. 
For these sampling models, we study the frequentist properties 
of the posterior distribution for standard prior that have not been considered 
for inverse regression problems so far.

%%%%%%%%%%%%%%%%%%%%%%%%%%%%%%%%%%%%%%%%%%%%%%%%%%%%%%%%%%%%%%%%%%%%%%%%%%%%%%%%%%%%%%%%%%

\subsection{Numerical differentiation using spline prior}
\label{sec:splines}

%%%%%%%%%%%%%%%%%%%%%%%%%%%%%%%%%%%%%%%%%%%%%%%%%%%%%%%%%%%%%%%%%%%%%%%%%%%%%%%%%%%%%%%%%%

In this section, we consider the inverse regression problem (\ref{eq:regression:model}) 
with the operator $K$ between $L_1[0,1]$ and the space of functions differentiable 
almost everywhere on the interval $[0,1]$ (see also Chapter 7 of \citep{RudinAnalysis}) defined by
\begin{equation}
	Kf(x) = \int_0^x f(t) dt, \qquad \text{for } x \in [0,1]. 
\label{eq:model:spline:voltera}
\end{equation}
We note that the operator $K$ is not defined between two Hilbert spaces, 
hence goes beyond the concept of singular value decomposition. 
This model is particularly useful for numerical differentiation, for instance,  
and has been well studied in the literature. In particular, in \citep{cavalier2008inverse} 
a related problem of estimating a derivative of a square integrable function 
is presented and it is shown that the SVD basis is the Fourier basis. 
Moreover, the operator is mildly ill-posed of degree $1$ (cf.\ Section \ref{sec:sequence:model}).  
We consider a prior on $f$ that is well-suited if the true regression function 
$f_0$ belongs to the H\"older space $\mathcal{H}(\beta,L)$ for some $\beta >0$, 
that is $f_0$ is $\beta_0 = \lfloor \beta \rfloor$ times differentiable and
\[
	\|f_0\|_\beta = 
	\sup_{x \neq y} \frac{|f^{(\beta_0)}(x) - f^{(\beta_0)}(y)|}{|x-y|^{\beta - \beta_0}} \leq L.  
\]
%v
Since $Kf_0$ is $(\beta_0 +1)$ times differentiable, 
it also holds that $f_0 \in \mathcal{H}(\beta,L)$ implies 
then $Kf_0 \in \mathcal{H}(\beta +1,L)$.

We construct a prior on $f$ by considering its decomposition in  
a B-spline basis. A definition of the B-spline basis can be found in 
\citep{de1978practical}. For a fixed positive integer $q>1$ called 
the degree of the basis, and a given partition of $[0,1]$ in $m$ 
subintervals of the form $((i-1)/m,i/m]$, the space of splines 
is a collection of function $f (0,1] \to \R$ that are $q-2$ times 
differentiable and if restricted to one of the sets $((i-1)/m,i/m]$, 
are polynomial of degree at most $q$. An interesting feature of the space 
of splines is that it forms a $J$-dimensional linear space with 
the so called B-spline basis denoted $(B_{1,q}, \dots, B_{J,q})$, 
for $J = m+q-1$. 
Priors based on the decomposition of the function $f$ in the B-spline 
basis of order $q$ have been considered in the regression setting in, e.g.,  
\citep{GhosalVdv07} and \citep{ShenGhosalAdaptive2014}, and are commonly 
used in practice. Here we construct a different version of the prior that 
will prove to be useful to derive contraction rate for the direct problem 
and the inverse problem. 

Let the prior distribution on $f$ be defined as follows:
\begin{equation}
	\Pi := 
	\begin{cases}
J \sim \Pi_J  \\
a_1, \dots a_J \overset{iid}{\sim} \Pi_{a,J}  \\
f(x) = {J} \sum_{j=1}^{J-1} (a_{j+1} - a_j) B_{j,q-1}(x). 
	\end{cases}
\label{eq:regression:prior}
\end{equation}
Given the definition of $B_{j,q}$ in \citep{de1978practical}, standard computations give
\[
	B_{j,q}'(x) = {J} \bigl( B_{j,q-1}(x) - B_{j+1,q-1}(x) \bigr)
\]
which in turn gives
\begin{equation}
	Kf(x) = \sum_{j=1}^{J} a_j B_{j,q}(x).
\label{eq:reg:voltera:Kf}
\end{equation}
This explains why we choose a prior as in (\ref{eq:regression:prior}) since it 
leads to the usual spline prior on $Kf$. Note that the condition that 
$Kf(0) = 0$ can be imposed by a specific choice of nodes for the 
B-spline basis (see \citep{de1978practical}).
To compute the modulus of continuity for this model, we need to impose 
some conditions on the design. Let $\Sigma_n^q$ be a matrix 
defined by its elements
\[
	(\Sigma_n^q)_{i,j} = \frac{1}{n} \sum_{l=1}^n B_{i,q}(x_l) B_{j,q}(x_l), \quad i,j = 1, \dots, J.
\]
Similarly to \citep{GhosalVdv07}, we ask that the design points satisfy 
the following conditions: 
\begin{description}
\item[\bf D1] for all $\v_1 \in \R^J$
\[
J^{-1} \|\v_1\|^2_{J} \asymp \v_1' \Sigma_n^q \v_1
\]
\item[\bf D2] for all $\v_2 \in \R^{J-1}$ 
\[
(J-1)^{-1} \|\v_2\|^2_{J-1} \asymp \v_2' \Sigma_n^{(q-1)} \v_2.
\]
\end{description}
Condition {\bf D1} is natural when considering B-splines priors 
in a regression setting, and both conditions are satisfied for 
a wide variety of designs. 
Consider for instance the uniform design $x_i = i/n$ for $i = 1, \dots, n$. 
Then given Lemma 4.2 in \citep{ghosal2000convergence}, 
we get that for $\v_1 \in \R^J$, $\v_2 \in \R^{J-1}$ 
\[
	\|\v_1\|_J^2 J^{-1} \lesssim 
		\Bigl\| \sum_{j=1}^J \v_{1,j} B_{j,q} \Bigr\|^2 
		\lesssim \|\v_1\|_J^2 J^{-1},
\]
\[
	\|\v_2\|_{J-1}^2 (J-1)^{-1} \lesssim 
		\Bigl\| \sum_{j=1}^{J-1} \v_{2,j} B_{j,q-1} \Bigr\|^2 
		\lesssim \|\v_2\|_{J-1}^2 (J-1)^{-1},
\]
and the constants depend only on $q$.
Furthermore, we have that 
\[
	\Bigl\| \sum_{j=1}^{J} \v_{1,j} B_{j,q} \Bigr\|^2 = \v_1' \Sigma_n^q \v_1 + O\bigl(n^{-1}\bigr),
\]
where the remainder depends only on $q$. Similarly, 
\[
	\Bigl\| \sum_{j=1}^{J-1} \v_{2,j} B_{j,q-1} \Bigr\|^2 = \v_2' \Sigma_n^{q-1} \v_2 + O\bigl(n^{-1}\bigr). 
\]
Thus \textbf{D1} and \textbf{D2} are satisfied for the uniform design for all $J=o(n)$.

We now go on and derive conditions on the prior such that the posterior 
contracts at the minimax adaptive rate (up to a $\log n$ factor). 
The prior we consider is not conjugate, and does not depend on the singular 
value decomposition of the operator $K$ for obvious reasons. 

\begin{theorem}
Let $Y^n = (Y_1, \dots, Y_n)$ be a sample from (\ref{eq:regression:model}) 
with $\mathcal{X} = [0,1]$ and $\Pi$ be a prior for $f$ as in (\ref{eq:regression:prior}). 
Suppose that $\Pi_J$ is such that for some constants $c_d, c_u > 0$ and $t\geq 0$, 
\begin{equation}
	\exp(-c_d j (\log j)^t) \leq \Pi_J(j\leq J \leq 2 j), \quad  
	\Pi_J(J>j) \lesssim \exp(-c_u j (\log j)^t),
\label{eq:regression:volterra:conditionJ}
\end{equation}
for all $J>1$, and suppose that $\Pi_{a,J}$ is such that for all 
$a_0 \in \R^J$, $\|a_0\|_\infty \leq H$, there exists a constant $c_2$ 
depending only on $H$ such that
\begin{equation}
	\Pi_{a,J}(\|a-a_0\|_J \leq \epsilon) \geq \exp(-c_2 J \log(1/\epsilon))
\label{eq:regression:volterra:conditionPi_J}
\end{equation}
Let $\Theta(\beta,L,H) = \{ f \in \mathcal{H}(\beta,L), \|f\|_\infty \leq H\}$.
If the design $(x_1,\dots,x_n)$ satisfies conditions \textbf{D1} and \textbf{D2}, 
then for all $L$ and for all $\beta \leq q$ 
there exits a constant $C>0$ that depends only on $q$, $L$, $H$ and $\Pi$ 
such that if $f_0 \in \mathcal{H}(\beta,L)$, then 
\begin{equation}
     \sup_{\beta \leq q-1}	\sup_{f_0 \in \Theta(\beta,L,H) } 
     \E_0  \Pi\bigl( \|f - f_0\|_n \geq  C n^{-\frac{\beta}{2 \beta +3}} (\log n)^{3r}  \bigm| Y^n \bigr) \to 0, 
	\label{eq:regression:thvoltera:result}
\end{equation}
with $r = (1\vee t)(\beta+1)/(2\beta + 3)$.
\label{th:regression:voltera}
\end{theorem}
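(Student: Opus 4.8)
The plan is to instantiate the general scheme of Theorem~\ref{thm:main}: first establish a contraction rate for the direct problem of recovering $Kf$, then choose the sets $\mathcal{S}_n$ and bound the associated modulus of continuity through a change of norms in the B-spline basis, and finally combine the two. Since the prior~(\ref{eq:regression:prior}) induces the usual spline prior $Kf=\sum_{j=1}^{J}a_j B_{j,q}$ on $Kf$, and $f_0\in\mathcal{H}(\beta,L)$ implies $Kf_0\in\mathcal{H}(\beta+1,L)$, the direct problem is a standard adaptive nonparametric regression problem with a random-dimension spline prior. First I would verify the conditions of the contraction theorem of \citep{GhosalVdv07} for this induced prior: a lower bound on the prior mass of the Kullback--Leibler ball $B_n(Kf_0,\epsilon_n)$, a sieve of bounded-dimension splines with controlled entropy, and negligible remaining mass on its complement. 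Approximating $Kf_0$ by an order-$q$ spline with $J\asymp n^{1/(2\beta+3)}$ knots (error $\asymp J^{-(\beta+1)}$, legitimate since $\beta+1\leq q$) and using the small-ball bound~(\ref{eq:regression:volterra:conditionPi_J}) together with~(\ref{eq:regression:volterra:conditionJ}) yields a uniform direct contraction rate $\epsilon_n\asymp n^{-(\beta+1)/(2\beta+3)}(\log n)^{r}$ with $r=(1\vee t)(\beta+1)/(2\beta+3)$.

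The second and conceptually central step is the modulus computation. I would take $\mathcal{S}_n$ as in~(\ref{eq:Sn}) adapted to the spline setting, namely splines of dimension $J\leq\bar J_n$ with $\bar J_n\asymp n^{1/(2\beta+3)}(\log n)^{2r}$. For two splines $f,g$ with the same $J$ and coefficient difference $b=a-a'$, conditions \textbf{D1} and \textbf{D2} give $\|Kf-Kg\|_n^2\asymp J^{-1}\|b\|_J^2$, while $f-g=J\sum_{j}(b_{j+1}-b_j)B_{j,q-1}$ gives $\|f-g\|_n^2\asymp J\sum_j(b_{j+1}-b_j)^2\lesssim J\,\|b\|_J^2$; combining, $\|f-g\|_n\lesssim\bar J_n\,\|Kf-Kg\|_n$ on $\mathcal{S}_n$. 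As the truth is not a spline, I would introduce $f_n$, the order-$(q-1)$ spline approximation of $f_0$ with $\bar J_n$ knots, whose antiderivative $Kf_n$ is simultaneously an order-$q$ spline approximation of $Kf_0$; the relevant approximation fact is $\|f_n-f_0\|_n\lesssim\bar J_n^{-\beta}$ and $\|Kf_n-Kf_0\|_n\lesssim\bar J_n^{-(\beta+1)}$. Two triangle inequalities, exactly as in the derivation of~(\ref{eq:mod}), then give
\[
\omega(\mathcal{S}_n,f_0,\|\cdot\|_n,\|\cdot\|_n\circ K,\delta)\lesssim \bar J_n\,\delta+\bar J_n^{-\beta}.
\]
With $\delta=M\epsilon_n$ the two terms balance to $\bar J_n\epsilon_n\asymp n^{-\beta/(2\beta+3)}(\log n)^{3r}$, the claimed rate.

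It remains to verify the remaining-mass condition~(\ref{eq:postmass}) for this $\mathcal{S}_n$ via Lemma~\ref{lem:main}. The complement mass is $\Pi(\mathcal{S}_n^c)\leq\Pi_J(J>\bar J_n)\lesssim\exp(-c_u\bar J_n(\log\bar J_n)^t)$, and the choice of $\bar J_n$ makes $\bar J_n(\log\bar J_n)^t$ exceed a large multiple of $n\epsilon_n^2$, while the prior-mass lower bound from the first step controls the denominator; hence the ratio is bounded by $\exp(-2n\epsilon_n^2)$ uniformly over $\Theta(\beta,L,H)$. Feeding~(\ref{eq:mainth:condition:contraction:direct}) and~(\ref{eq:postmass}) into Theorem~\ref{thm:main} gives the conclusion, with $C$ absorbing the implied constants and $M$; uniformity over $\beta\leq q-1$ follows because the prior does not depend on $\beta$ and all constants depend only on $q,L,H,\Pi$.

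The main obstacle I anticipate is the precise bookkeeping of the logarithmic factors in the direct problem. The prior-mass, entropy, and remaining-mass requirements of \citep{GhosalVdv07} pull the dimension cutoff and the rate in slightly different directions --- the sieve entropy scales like $\bar J_n\log(1/\epsilon_n)$, a ``$\log n$'' cost, whereas the prior on $J$ contributes a $(\log n)^t$ cost --- and it is exactly the maximum $1\vee t$ of these two effects, propagated through the $J$-amplification of the modulus, that produces the exponent $3r$. Getting this power right, rather than merely a rate up to unspecified logarithmic factors, requires carefully matching these competing contributions; the simultaneous spline approximation of $f_0$ and its antiderivative $Kf_0$, together with the verification that \textbf{D2} transfers the change of norms to the empirical $\|\cdot\|_n$ metric, are the remaining technical points.
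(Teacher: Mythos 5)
Your proposal is correct and follows essentially the same route as the paper: establish the direct rate $n^{-(\beta+1)/(2\beta+3)}(\log n)^r$ via the Ghosal--van der Vaart conditions for the induced spline prior on $Kf$ (the paper's Lemma~\ref{lem:adapt:cvrate:splines}), take $\mathcal{S}_n$ to be the splines of dimension $J\lesssim n\epsilon_n^2$, bound the modulus by $J_n\delta$ plus approximation terms using \textbf{D1}, \textbf{D2} and simultaneous spline approximation of $f_0$ and $Kf_0$, and conclude via Theorem~\ref{thm:main} and Lemma~\ref{lem:main}. The only (immaterial) difference is a $(\log n)^{-t}$ factor in your choice of the dimension cutoff versus the paper's $J_n=J_0 n\epsilon_n^2(\log n)^{-t}$, which does not affect the stated $(\log n)^{3r}$ bound.
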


Condition (\ref{eq:regression:volterra:conditionJ}) is for instance 
satisfied by the Poisson or geometric distribution. A similar condition 
is considered in \citep{ShenGhosalAdaptive2014}. 
Condition (\ref{eq:regression:volterra:conditionPi_J}) is satisfied for usual 
choices of priors, such as the product of $J$ independent copies of a distribution 
that admits a continuous density. Similar results hold for functions that 
are not uniformly bounded, with additional conditions on the tails of 
$\Pi_{a,J}$. This will only require additional computations similar 
to those in \citep{ShenGhosalAdaptive2014}, and will thus not be treated here.   

This theorem gives theoretical validation for a family of priors that are widely used in practice for regression problems and are easy to implement. A key feature here is that we can control the transformation of a spline basis function by the operator $K$ through \eqref{eq:reg:voltera:Kf}, which in turn allows us to control the change of norms. This point is highly interesting as it gives guidelines for the construction of priors for inverse problems. Namely, it suggests that a prior whose geometry does not change too much through the application of the operator $K$ could lead to optimal contraction for the inverse problem.  

%%%%%%%%%%%%%%%%%%%%%%%%%%%%%%%%%%%%%%%%%%%%%%%%%%%%%%%%%%%%%%%%%%%%%%%%%%%%%%%%%%%%%%%%%%%%%%%%%%

\subsection{Deconvolution using mixture priors}
\label{sec:mixtures}

%%%%%%%%%%%%%%%%%%%%%%%%%%%%%%%%%%%%%%%%%%%%%%%%%%%%%%%%%%%%%%%%%%%%%%%%%%%%%%%%%%%%%%%%%%%%%%%%%%

%!TEX root = ../inverse_general_BJ_rev.tex

In this section, we consider the model (\ref{eq:regression:model}), 
where $K$ is the convolution operator in $\R$. This model is 
widely used in practice, especially when considering auxiliary 
variables in a regression setting or for image deblurring. 
For a convolution kernel $\lambda \in L_2(\R)$ symmetric around 0, 
and for all $f \in L_2(\R)$, we define $K$ as
\begin{equation}
	Kf(x) = \lambda \star f(x)  = \int_\R f(u) \lambda(x-u) du ,~ \qquad \text{for } x \in \R.
\label{eq:regression:convol:operator}
\end{equation}  
To the authors' best knowledge, theoretical properties of Bayesian 
nonparametric approach to this nonparametric regression model have not been studied in the literature. 
In this setting we consider a mixture type prior on $f$, 
and derive an upper bound for the posterior contraction rate. 
Mixture priors are common in the Bayesian literature: \citep{vdv2001}, 
\citep{GhosalVdv07} and \citep{SGmulti12} consider mixtures of Gaussian 
kernels, \citep{kruijer:rousseau:vdv:09} consider location scale mixture 
and \citep{rousseau2010beta} studies mixtures of betas. 
Nonetheless, since they do not fit well into the usual setting based 
on the SVD of the operator, mixture priors have not be considered 
in the literature for ill-posed inverse problems. In our case, 
they proved particularly well suited for the deconvolution problem. 

Let $Y^n = (Y_1, \dots, Y_n)$ be sampled from model (\ref{eq:regression:model}) 
for a true regression function $f_0 \in L_2(\R)$ with $\mathcal{X} = \R$, 
and assume that for $c_x>0$, for all $i =1, \dots, n$, 
$x_i \in [-c_x \log n, c_x \log n]$. It is equivalent to imposing tail 
conditions on the design distribution in the random design setting. 
We choose a prior that is well suited for $f_0$ in the Sobolev ball $W^\beta(L)$, for 
some $\beta > 0$. To avoid technicalities, we will also assume that 
$f_0$ has finite support, that we may choose 
to be $[0,1]$ without loss of generality. Similar results should hold 
for function with support on $\R$ with additional assumptions on the 
tails of $f_0$ but are not treated here.

For a collection of kernels $\Psi_v$ that depend on the parameter $v$, 
a positive integer $J$ and a sequence of nodes $(z_1,\dots,z_J)$ 
we consider the following decomposition of the regression function $f$ 
from the model (\ref{eq:regression:model})
\[
	f(\cdot) = \sum_{j=1}^J w_j \Psi_v(\cdot - z_j),
\]
where $(w_1,\dots,w_J)\in \R^J$ is a sequence of weights. We choose $\Psi_j$ 
proportional to a Gaussian kernel of variance $v^2$ and the uniform sequence 
of nodes $z_j = j/J$ for $j$ such that $j/J \in [-2c_x \log n, 2c_x \log n]$
\[
	\Psi_{j,v}(x) = \Psi_v(x - z_j) = \frac{1}{\sqrt{2\pi v^2}} e^{-\frac{(x-j/J)^2}{2v^2}},
\]
The choice of a Gaussian kernel is fairly natural in the nonparametric literature. 
In our specific case it will prove to be particularly well suited. 
The main advantage of Gaussian kernels in this case is that we can easily compute 
the Fourier transform of $f$ and thus use a similar approach as in 
Section \ref{sec:modulus} to control the modulus of continuity.
We consider the following prior distribution on $f$
\begin{equation}
	\Pi :=
	\begin{cases}
		J \sim \Pi_J \\ 
		v \sim \Pi_v \\
		w_1, \dots,w_J|J \sim \bigotimes_{j=1}^J N(0,1) 
	\end{cases}
\label{eq:mixture:def:prior}
\end{equation}
We use a specific Gaussian prior for the weights $(w_1,\dots,w_J)$ 
in order to use the results on Reproducing Kernel Hilbert Spaces 
following \citep{dejongvzanten12} to derive contraction rate for 
the direct problem. However, we believe that the following result 
should hold for more general classes of priors, but the computations 
would be more involved. 

Following \citep{fan1991optimal}, we define the degree of ill-posedness 
of the problem through the Fourier transform of the convolution kernel. 
For $p>0$, we say that the problem is mildly ill-posed of degree $p$ 
if there exist some constants $c,C>0$ such that for $\hat{\lambda}$, 
the Fourier transform of $\lambda$, 
\[
	\hat{\lambda}(t) = \int \lambda(u) e^{itu} du,
\]
we have for $|t|$ sufficiently large
\begin{equation}
	c |t|^{-p} \leq |\hat{\lambda}(t)| \leq C |t|^{-p}, \quad p \in \N^*,
\label{eq:regressim:deconv:illpos}
\end{equation}
For all $f_0 \in W^\beta(L)$, we have that $Kf_0 \in W^{\beta +p}(L')$ for $L' = LC$. 
Under these conditions, the following Theorem gives an upper bound on 
the posterior contraction rate. 

\begin{theorem}
Let $Y^n = (Y_1, \dots, Y_n)$ be sampled from (\ref{eq:regression:model}) 
with $\mathcal{X} = \R$ and assume that the design satisfies 
$(x_1, \ldots, x_n) \in [-c_x\log n, c_x\log n]^n$. 
Let $f_0$ be such that for $\beta \in \N^*$ 
and $M > 0$, $f_0 \in W^\beta(L)$ with support on $[0,1]$ and 
$\|f_0\|_\infty \leq M$. Consider $K$ as in (\ref{eq:regression:convol:operator}) 
with $\lambda$ satisfying (\ref{eq:regressim:deconv:illpos}). 
Let $\Pi$ be a prior distribution as in (\ref{eq:mixture:def:prior}) 
with 
\[
	\Pi_J(J=j) \asymp j^{-s},
\]
\[
	v^{-q}\exp\Bigl(-\frac{c_d}{v} \log(1/v)^u \Bigr) 
			\lesssim \Pi_v(v) 
			\lesssim v^{-q}\exp\Bigl(-\frac{c_u}{v} \log(1/v)^u\Bigr),
\]
for some positive constants $s$, $c_u$, $c_d$, $q$, and $u$. 
Then there exist constants $C$ and $r$ depending only on $\Pi$, $L$, $K$ and $M$ such that 
\[
	\E_0 \Pi\bigl(\|f-f_0\| \geq Cn^{-\frac{\beta}{1+2\beta + 2p}} (\log n)^r \bigm| Y^n) \to 0.
\] 
\label{th:regression:deconvolution}
\end{theorem}

Note that the prior does not depend on the regularity $\beta$ of the true $f_0$ 
and the posterior contracts at the minimax rate. Our approach is thus adaptive. 
Moreover, the prior does not depend on the degree of ill-posedness either. 
It is thus well suited for a wide variety of convolution kernels. 
In particular, this can be useful when the operator is only partially known, 
as in this case when the regularity of the kernel may not be accessible. 
However, this is beyond the scope of this article.

We prove Theorem \ref{th:regression:deconvolution} by applying Theorem \ref{thm:main} 
together with Lemma \ref{lem:main}. A first difficulty is to define the sets $\mathcal{S}_n$ 
on which we can control the modulus of continuity. A second problem is to derive 
the posterior contraction rate for the direct problem, given that in our setting 
$Kf$ is supported on the real line: 
\citep{dejongvzanten12} derived the posterior contraction rate only for H\"older smooth functions 
with bounded support. However, their results directly extend to the case of convolution of 
Sobolev functions with bounded support given the results of \citep{scricciolo2014adaptive}. 
The complete proof of this Theorem is postponed to Section \ref{sec:proof:reg:deconv}.

% \begin{remark}
% One could also consider the case where the upper and lower bound on \eqref{eq:regressim:deconv:illpos} 
% do not decay at the same rate as for instance $c |t|^{-p_1} \leq |\hat{\lambda}(t)| \leq C |t|^{-p_2}$. 
% The adaptation of the proof of Theorem \ref{th:regression:deconvolution} is straighforward and one imediately get the upper bound on the posterior 
% contraction rate 
% $\epsilon_n \leq n^{-\beta + p_1 - p_2 \over 1 + 2\beta + 2p_2} \log(n)^r$. However is not clear if this rate is minimax.
% \end{remark}

%%%%%%%%%%%%%%%%%%%%%%%%%%%%%%%%%%%%%%%%%%%%%%%%%%%%%%%%%%%%%%%%%%%%%%%%%%%%%%%%
%\section{Applications}

%\input{/Parts/applications.tex}

\section{Proofs} 
\label{sec:proofs}
\subsection{Proof of the main theorem}
\label{sec:proof:main}
\begin{proof}[Proof of Theorem \ref{th:main}]
By the definition of the modulus of continuity
\begin{align*}
	&\E_0\Pi\bigl(f: d(f,f_0) \geq \omega(\mathcal{S}_n,f_0,d, d_K,M_n\epsilon_n)\, | \,  Y^n\bigr)\\
    &\qquad\qquad \leq \E_0\Pi\bigl(f\in\mathcal{S}_n: d(f,f_0) \geq \omega(\mathcal{S}_n, f_0, d, d_K,M_n\epsilon_n)\, | \,  Y^n\bigr) 
		+ \E_0\Pi(\mathcal{S}_n^c \, | \, Y^n)\\
    &\qquad\qquad  \leq \E_0\Pi\bigl(f\in\mathcal{S}_n: d_K(Kf,Kf_0) \geq M_n \epsilon_n\, | \,  Y^n\bigr) 
        + \E_0\Pi(\mathcal{S}_n^c \, | \, Y^n).
\end{align*}
Together with (\ref{eq:postmass}) and (\ref{eq:mainth:condition:contraction:direct}) it completes the proof.
\end{proof}  

\subsection{Proofs of Section~\ref{sec:sequence:model}} 

\subsubsection{Mildly ill-posed problems}
\label{sec:proof:mildly}

\begin{proof}[Proof of Theorem \ref{th:mildly}]
We first note that if $\|f\|_\beta \leq R$, then $\|Kf\|_{\beta+p} \leq CR$. 
Next we verify the condition of Lemma~\ref{lem:main}. Let 
\[
	k_n = n^{\frac{1}{1+2\alpha+2p}}, \quad 
	\rho_n = n^{-\frac{(\alpha \wedge \beta)}{1+2\alpha+2p}}, \quad
	\epsilon_n = n^{-\frac{(\alpha \wedge \beta)+p}{1+2\alpha+2p}}.
\]
Note that
\[
	n\epsilon_n^2 = n\cdot n^{-\frac{2(\alpha \wedge \beta)+2p}{1+2\alpha+2p}} 
	= n^{\frac{1+2\alpha - 2(\alpha \wedge \beta)}{1+2\alpha+2p}}
	= \epsilon_n^{-\frac{1+2\alpha-2(\alpha\wedge\beta)}{(\alpha\wedge\beta)+p}}, 
\]
hence $\Pi(B_n(Kf_0, \epsilon_n))\gtrsim \exp(-C_2n\epsilon_n^2)$ uniformly over 
a Sobolev ball of radius $R$ (see Lemma~~\ref{lem:mildlyKL} at the end of this subsection). 

Note also that 
\[
	\rho_n^2k_n^{1+2\alpha} = n^{-\frac{2(\alpha \wedge \beta)}{1+2\alpha+2p}}\cdot n^{\frac{1+2\alpha}{1+2\alpha+2p}}
	= n^{\frac{1+2\alpha - 2(\alpha \wedge \beta)}{1+2\alpha+2p}}
	= n\epsilon_n^2, 
\]
and given $c \geq 2(1+2\alpha)/\alpha$ we have $\Pi(\mathcal{S}_n^c) \leq \exp(-(c/8)n\epsilon_n^2)$ 
by Lemma~\ref{lem:mildlypriormass}.

Hence
\[
	\frac{\Pi(\mathcal{S}_n^c)}{\Pi(B_n(Kf_0, \epsilon_n))} \lesssim \exp\Bigl(-\Bigl(\frac{c}{8}-C_2\Bigr)n\epsilon_n^2\Bigr),
\]
uniformly over a ball of radius $R$. The condition of Lemma~\ref{lem:main} is verified upon 
choosing $c = 8(2+C_2)\vee 2(1+2\alpha)/\alpha$.

Finally, we note that (cf.\ (\ref{eq:mod}))
\begin{align*}
	\omega(\mathcal{S}_n,f_0, \|\cdot\|, &\|\cdot\|,M_n\epsilon_n) \\
		&\lesssim M_n n^{\frac{p}{1+2\alpha+2p}}\cdot n^{-\frac{(\alpha \wedge \beta)+p}{1+2\alpha+2p}} +
		n^{-\frac{(\alpha \wedge \beta)}{1+2\alpha+2p}} + n^{-\frac{\beta}{1+2\alpha+2p}}\\
		&\lesssim M_n n^{-\frac{(\alpha \wedge \beta)}{1+2\alpha+2p}},
\end{align*}
which ends the proof.
\end{proof}

\begin{lemm}
\label{lem:mildlyKL}
Suppose $f_0 \in S^\beta$. Then for every $R > 0$ there exist 
positive constants $C_1, C_2$ such that for all $\epsilon \in (0, 1)$, 
\[
    \inf_{\|f_0\|_\beta\leq R}\Pi(B_n(Kf_0,\epsilon))
  \geq C_1\exp\Bigl(-C_2\epsilon^{-\frac{1+2\alpha-2(\alpha\wedge\beta)}{(\alpha\wedge\beta)+p}}\Bigr).
\]
\end{lemm}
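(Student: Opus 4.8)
The plan is to reduce the Kullback--Leibler neighbourhood to an ordinary $\ell_2$-ball for the induced prior on $Kf$, and then to lower bound the small-ball probability of a centred Gaussian product measure around a shifted centre.

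First I would make the reduction explicit. In the white noise model the coordinates are independent with $Y_i \sim N((Kf)_i, 1/n)$, so a direct Gaussian computation gives
\[
    -\int p_{Kf_0}\log\frac{p_{Kf}}{p_{Kf_0}}\,d\mu = \tfrac{n}{2}\|Kf-Kf_0\|^2,
\]
while the variance of $\log(p_{Kf}/p_{Kf_0})$ under $p_{Kf_0}$ equals $n\|Kf-Kf_0\|^2$. Both quantities defining $B_n(Kf_0,\epsilon)$ are therefore controlled by $n\|Kf-Kf_0\|^2$, so there is an absolute constant $c_0$ with $\{f:\|Kf-Kf_0\|^2\le c_0\epsilon^2\}\subseteq B_n(Kf_0,\epsilon)$, and it suffices to bound $\Pi(\|Kf-Kf_0\|\le\sqrt{c_0}\,\epsilon)$ from below.

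The induced prior on $h=Kf$ is the centred Gaussian product $\bigotimes_i N(0,\sigma_i^2)$ with $\sigma_i^2=\kappa_i^2\lambda_i\asymp i^{-\gamma}$, where $\gamma:=1+2\alpha+2p$, and the centre $h_0=Kf_0$ satisfies $\sum_i h_{0,i}^2 i^{2(\beta+p)}\le(CR)^2$ (from $\|Kf_0\|_{\beta+p}\le CR$). I would split the coordinates at a cutoff $N\asymp\epsilon^{-1/((\alpha\wedge\beta)+p)}$ (which is the order of $k_n$). On the high frequencies $i>N$ the tail of the centre obeys $\sum_{i>N}h_{0,i}^2\le N^{-2(\beta+p)}(CR)^2\lesssim\epsilon^2$, so it is enough to require that $\sum_{i>N}h_i^2$ be small; this is controlled by the classical centred Gaussian small-ball estimate $\Pi(\sum_i h_i^2\le\delta^2)\gtrsim\exp(-C\delta^{-2/(\gamma-1)})$, with $2/(\gamma-1)=1/(\alpha+p)$. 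On the low frequencies $i\le N$ I would use the shifted small-ball inequality (Anderson's lemma together with the Cameron--Martin shift), which produces the bias cost $\tfrac12\sum_{i\le N}h_{0,i}^2/\sigma_i^2\asymp\sum_{i\le N}h_{0,i}^2 i^{\gamma}\lesssim N^{(1+2\alpha-2\beta)_+}(CR)^2$, the exponent $1+2\alpha-2\beta=\gamma-2(\beta+p)$ arising from the smoothness gap between prior and truth.

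Collecting the two contributions, the bias cost is of order $\epsilon^{-(1+2\alpha-2\beta)_+/((\alpha\wedge\beta)+p)}$ and the centred small-ball cost is of order $\epsilon^{-1/(\alpha+p)}$; a short case analysis in the regimes $\beta\le\alpha$ and $\beta>\alpha$ shows that the larger of the two exponents is exactly $r:=\tfrac{1+2\alpha-2(\alpha\wedge\beta)}{(\alpha\wedge\beta)+p}$, which gives $\Pi(\|Kf-Kf_0\|\le\sqrt{c_0}\,\epsilon)\gtrsim\exp(-C_2\epsilon^{-r})$, the asserted bound. Since every estimate above depends on $f_0$ only through $\|Kf_0\|_{\beta+p}\le CR$, the lower bound is uniform over $\|f_0\|_\beta\le R$, yielding the infimum form. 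I expect the main obstacle to be the low-frequency shifted small-ball step: one must balance the cutoff $N$ so that the bias cost, the centred small-ball cost and the tail of $h_0$ all stay at order $\epsilon^{-r}$ (respectively $\epsilon^2$) simultaneously, and it is precisely this optimisation that makes the minimum $\alpha\wedge\beta$ appear in the rate.
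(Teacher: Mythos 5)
Your argument is correct and follows essentially the same route as the paper's proof: both reduce the Kullback--Leibler neighbourhood to an $\ell_2$-ball for $Kf$, split the coordinates at a cutoff of order $\epsilon^{-1/((\alpha\wedge\beta)+p)}$, absorb the tail of $Kf_0$ via the Sobolev smoothness, control the stochastic tail at cost at most $\exp(-C\epsilon^{-1/(\alpha+p)})$, and pay a decentering cost $\exp(-CN^{(1+2\alpha-2\beta)_+})$ on the low frequencies before the same two-case analysis in $\alpha\gtrless\beta$. The only difference is cosmetic: you invoke the abstract Cameron--Martin/Anderson decentred small-ball inequality where the paper appeals to Lemma~6.2 of Belitser, which carries out the same finite-dimensional computation (and your exponent $(1+2\alpha-2\beta)_+$ correctly renders what appears in the paper's display as the typo $\wedge\,0$ in place of $\vee\,0$).
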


\begin{proof}%[Proof of Lemma \ref{lem:mildlyKL}]
This proof is adapted from \citep{Belitser}. 
Recall that in the white noise model the $\ell_2$ balls and Kullback--Leibler
neighborhoods are equivalent. By independence, for any $N$, 
\begin{equation}
\label{eq:twoterms}
\begin{split}
    \Pi\Bigl(&\sum_{i=1}^\infty(\kappa_if_i-\kappa_if_{0,i})^2\leq \epsilon^2\Bigr)\\
        &\geq \Pi\Bigl(\sum_{i=1}^N(\kappa_if_i-\kappa_if_{0,i})^2\leq \epsilon^2/2\Bigr)
            \Pi\Bigl(\sum_{i=N+1}^\infty(\kappa_if_i-\kappa_if_{0,i})^2\leq \epsilon^2/2\Bigr).   
\end{split}
\end{equation}
Also
\begin{equation}
\label{eq:tailhelp}
    \sum_{i=N+1}^\infty(\kappa_if_i-\kappa_if_{0,i})^2 \leq 2\sum_{i=N+1}^\infty\kappa_i^2f_i^2
        + 2\sum_{i=N+1}^\infty\kappa_i^2f_{0,i}^2.
\end{equation}
The second sum in the display above is less than or equal to
\[
    2N^{-2\beta-2p}\sum_{i=N+1}^\infty i^{2\beta}f_{0,i}^2 
        \leq 2N^{-2\beta-2p}\|f_0\|_\beta^2 < \frac{\epsilon^2}{4},
\]
whenever $N > N_1 = (8\|f_0\|_\beta^2)^{1/(2\beta+2p)}\epsilon^{-1/(\beta+p)}$.

By Chebyshev's inequality, the first sum on the right-hand side of 
(\ref{eq:tailhelp}) is less than $\epsilon^2/4$ with probability at least
\[
    1-\frac{8}{\epsilon^2}\sum_{i=N+1}^\infty \E_{\Pi}(\kappa_i^2f_i^2) 
     = 1- \frac{8}{\epsilon^2}\sum_{i=N+1}^\infty i^{-1-2\alpha-2p} 
     \geq 1 - \frac{4}{(\alpha+p)N^{2(\alpha+p)}\epsilon^2}> 1/2
\]
if $N > N_2 = (8/(\alpha+p))^{1/(2\alpha+2p)}\epsilon^{-1/(\alpha+p)}$.

To bound the first term in (\ref{eq:twoterms}) we apply Lemma~6.2 in \cite{Belitser} 
with $\xi_i = \kappa_if_{0,i}$ and $\delta^2=\epsilon^2/2$. Note that
\begin{align*}
	\sum_{i=1}^Ni^{1+2\alpha+2p}\xi_i^2 &= \sum_{i=1}^Ni^{1+2\alpha+2p}\cdot i^{-2p}f_{0,i}^2 \\
	  &= \sum_{i=1}^Ni^{1+2\alpha-2\beta}f_{0,i}^2i^{2\beta} \leq N^{(1+2\alpha-2\beta)\vee 0}\|f_0\|^2_\beta.
\end{align*}
Therefore,
\begin{align*}
    \Pi\Bigl(\sum_{i=1}^N(\kappa_if_i-&\kappa_if_{0,i})^2 \leq \epsilon^2/2\Bigr)\\
      &\geq \exp\Bigl(-\Bigl(1+2\alpha+2p+\frac{\log 2}{2}\Bigr)N\Bigr)
          \exp\Bigl(-N^{(1+2\alpha-2\beta)\vee 0}\|f_0\|_\beta^2\Bigr)\\
      &\qquad \times{}
          \Pr\Bigl(\sum_{i=1}^N V_i^2 \leq 2\delta^2N^{1+2\alpha+2p}\Bigr).
\end{align*}
The last term, by the central limit theorem, is at least $1/4$ if $2\delta^2N^{1+2\alpha+2p}>N$ 
and $N$ is large, that is, $N > N_3 = \epsilon^{-1/(\alpha+p)}$ and $N > N_4$, where $N_4$ 
does not depend on $f_0$. Choosing $N = \max\{N_1, N_2, N_3, N_4\}$ we obtain
\begin{align*}
    \Pi(f:\|Kf-&Kf_0\|\leq \epsilon)\\
      &\geq \frac{1}{8}\exp\Bigl(-\Bigl(1+2\alpha+2p+\frac{\log 2}{2}\Bigr)N\Bigr)
          \exp\Bigl(-N^{(1+2\alpha-2\beta)\vee 0}\|f_0\|_\beta^2\Bigr).
\end{align*}
Consider $\alpha \geq \beta$. Then $\exp(-N) \geq \exp(- N^{(1+2\alpha-2\beta)})$ so 
\[
    \Pi(f:\|Kf-Kf_0\|\leq \epsilon) \geq \frac{1}{8}
          \exp\Bigl(-C_3N^{(1+2\alpha-2\beta)}\Bigr),
\]
for some constant $C_3$ that depends only on $\alpha, \beta, p$ and $\|f_0\|_\beta^2$. Moreover, 
since $\epsilon < 1$ and $\alpha \geq \beta$, $N$ is dominated by $\epsilon^{-1/(\beta+p)}$ and 
we can write
\[
    \Pi(f:\|Kf-Kf_0\|\leq \epsilon) \geq \frac{1}{8}
          \exp\Bigl(-C_4\epsilon^{-\frac{1+2\alpha-2\beta}{\beta+p}}\Bigr), 
\]
where $C_4$ depends on $f_0$ again through $\|f_0\|_\beta^2$ only. 

Now consider $\alpha < \beta$. Similar arguments lead to 
\[
    \Pi(f:\|Kf-Kf_0\|\leq \epsilon) \geq \frac{1}{8}
          \exp\Bigl(-C_5\epsilon^{-\frac{1}{\alpha+p}}\Bigr), 
\]
for some constant $C_5$ that depends only on $\alpha, \beta, p$ and $\|f_0\|_\beta^2$.
\end{proof}

\begin{lemm}
\label{lem:mildlypriormass}
Let $\rho_n$ be an arbitrary sequence tending to $0$,  $c$ be an arbitrary constant, 
and let the sequence $k_n \to \infty$ satisfy $k_n^{2\alpha} \geq 2(1+2\alpha)/(\alpha c\rho_n^2)$.
Then
\[
    \Pi(\mathcal{S}_n^c) \leq \exp\Bigl(-\frac{c}{8}\rho_n^2k_n^{1+2\alpha}\Bigr).
\]
\end{lemm}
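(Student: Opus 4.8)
The plan is to use that under the product prior $\Pi=\bigotimes_i N(0,\lambda_i)$ the coordinates are independent, so $\sum_{i>k_n}f_i^2$ is a weighted sum of independent $\chi^2_1$ random variables with weights $\lambda_i=i^{-1-2\alpha}$, and $\Pi(\mathcal{S}_n^c)=\Pi\bigl(\sum_{i>k_n}f_i^2>c\rho_n^2\bigr)$ is simply the right tail of such a sum. The natural and essentially only tool here is a Chernoff (exponential Markov) bound combined with the Gaussian chi-square Laplace transform: for $t>0$ with $2t\lambda_i<1$ for all $i>k_n$, independence gives
\[
   \int \exp\Bigl(t\textstyle\sum_{i>k_n}f_i^2\Bigr)\,\Pi(df)=\prod_{i>k_n}(1-2t\lambda_i)^{-1/2},
\]
and hence $\Pi(\mathcal{S}_n^c)\le e^{-tc\rho_n^2}\prod_{i>k_n}(1-2t\lambda_i)^{-1/2}$.

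First I would fix the free parameter. Since $\lambda_i$ is decreasing it suffices to keep $2t\lambda_{k_n}$ below $1$, so I would take $t=\tfrac14 k_n^{1+2\alpha}=(4\lambda_{k_n})^{-1}$, which makes $2t\lambda_i\le 2t\lambda_{k_n}=\tfrac12<1$ for every $i>k_n$ and keeps the Laplace transform finite. With this choice the exponential prefactor already contributes $\exp(-\tfrac{c}{4}\rho_n^2 k_n^{1+2\alpha})$, which is of the right order; the remaining work is to show the product term does not destroy it.

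Next I would control the product by passing to logarithms and using the elementary inequality $-\log(1-x)\le 2x$ valid on $[0,\tfrac12]$, which yields $\prod_{i>k_n}(1-2t\lambda_i)^{-1/2}\le \exp\bigl(2t\sum_{i>k_n}\lambda_i\bigr)$. An integral comparison then bounds the variance tail, $\sum_{i>k_n}\lambda_i=\sum_{i>k_n}i^{-1-2\alpha}\le \tfrac{1}{2\alpha}k_n^{-2\alpha}$, so with the chosen $t$ the product contributes at most $\exp\bigl(\tfrac{k_n}{4\alpha}\bigr)$ (a cruder constant on this tail is exactly what produces the factor $1+2\alpha$ in the hypothesis). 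Combining the two pieces gives a bound of the form $\exp\bigl(-\tfrac{c}{4}\rho_n^2 k_n^{1+2\alpha}+\tfrac{k_n}{4\alpha}\bigr)$.

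Finally I would absorb the stray positive term using the assumption on $k_n$. The hypothesis $k_n^{2\alpha}\ge 2(1+2\alpha)/(\alpha c\rho_n^2)$ is precisely calibrated so that $\tfrac{k_n}{4\alpha}$ is dominated by half of $\tfrac{c}{4}\rho_n^2 k_n^{1+2\alpha}$, leaving the claimed bound $\exp\bigl(-\tfrac{c}{8}\rho_n^2 k_n^{1+2\alpha}\bigr)$. I do not expect a genuine analytic difficulty: the weighted chi-square tail is light and the geometric decay of $\lambda_i$ makes every sum converge. The only real care is bookkeeping of constants, namely choosing $t$ large enough to make the leading exponent as negative as possible while keeping $2t\lambda_i<\tfrac12$, and tracking the numerical factors so that the given lower bound on $k_n^{2\alpha}$ matches the factor $\tfrac18$ in the conclusion.
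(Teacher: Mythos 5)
Your proposal is correct and follows essentially the same route as the paper: a Chernoff bound with the Gaussian moment generating function, the choice $t=k_n^{1+2\alpha}/4$, an elementary bound on $-\log(1-2t\lambda_i)$ (you use $-\log(1-x)\le 2x$ on $[0,\tfrac12]$ where the paper uses $-\log(1-y)\le y/(1-y)$, which are interchangeable here), an integral comparison for $\sum_{i>k_n}i^{-1-2\alpha}$, and absorption of the stray term via the hypothesis on $k_n^{2\alpha}$. The constant bookkeeping works out, so no gap.
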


\begin{proof}%[Proof of Lemma \ref{lem:mildlypriormass}]
For $W_1, W_2, \ldots$ independent standard normal random variables
\[
    \Pi(\mathcal{S}_n^c) = \Pr\Bigl(\sum_{i>k_n} \lambda_iW_i^2
            > c\rho_n^2\Bigr).
\]
For some $t > 0$
\begin{align*}
    \Pr\Bigl(&\sum_{i>k_n} \lambda_iW_i^2
            > c\rho_n^2\Bigr)\\
        &= \Pr\Bigl(\exp\Bigl(t\sum_{i>k_n} \lambda_iW_i^2\Bigr)
            > \exp(tc\rho_n^2)\Bigr)
        \leq \exp(-tc\rho_n^2)
            \E \exp\Bigl(t\sum_{i>k_n} \lambda_iW_i^2\Bigr)\\
%        &= \exp(-tc\rho_n^2)
%            \E\prod_{i>k_n} \exp(t\lambda_iW_i^2)\\
        &= \exp(-tc\rho_n^2)
            \prod_{i>k_n} \E\exp(t\lambda_iW_i^2)
        = \exp(-tc\rho_n^2)
            \prod_{i>k_n} (1-2t\lambda_i)^{-1/2}.
\end{align*}
We first applied Markov's inequality, and later used properties of
the moment generating function. Here we additionally assume
that $2t\lambda_i < 1$ for $i > k_n$.

We take the logarithm of the right-hand side of the
previous display. Since $\log(1-y)\geq -y/(1-y)$, we have
\begin{align*}
    -tc\rho_n^2 + &\sum_{i>k_n}\log (1-2t\lambda_i)^{-1/2}\\
        &=  -tc\rho_n^2
            - \frac{1}{2} \sum_{i>k_n}\log (1-2t\lambda_i)
        \leq -tc\rho_n^2
            + \frac{1}{2} \sum_{i>k_n}\frac{2t\lambda_i}{1-2t\lambda_i}.
\end{align*}
We continue with the latter term, noticing that $1-2t\lambda_i > 1-2tk_n^{-1-2\alpha}$ 
for $i > k_n$
\[
	\frac{1}{2} \sum_{i>k_n}\frac{2t\lambda_i}{1-2t\lambda_i} \leq 
	   \frac{t}{1-2tk_n^{-1-2\alpha}} \sum_{i>k_n}i^{-1-2\alpha}.
\]
Since $x^{-1-2\alpha}$ is decreasing, 
we have that
\[
	\sum_{i>k_n}i^{-1-2\alpha} \leq \int_{k_n}^\infty x^{-1-2\alpha}\, dx + k_n^{-1-2\alpha} 
	  = \frac{k_n^{-2\alpha}}{2\alpha} + k_n^{-1-2\alpha} \leq k_n^{-2\alpha} \frac{1+2\alpha}{2\alpha}, 
\]
noting that $k_n > 1$ for $n$ large enough. Finally
\[
	    -tc\rho_n^2
            + \sum_{i>k_n}\log (1-2t\lambda_i)^{-1/2} 
            \leq -tc\rho_n^2
            + \frac{1+2\alpha}{2\alpha}
                \frac{t}{1-2tk_n^{-1-2\alpha}} k_n^{-2\alpha}.
\]
Thus for $t = k_n^{1+2\alpha}/4$
\[
    \Pi(\mathcal{S}_n^c)\leq \exp\Bigl(-\frac{c}{4}\rho_n^2k_n^{1+2\alpha} + \frac{1+2\alpha}{4\alpha}k_n\Bigr) 
	\leq \exp\Bigl(-\frac{c}{8}\rho_n^2k_n^{1+2\alpha}\Bigr), 
\]
since $k_n^{2\alpha} \geq 2(1+2\alpha)/(\alpha c\rho_n^2)$.
\end{proof}

\subsubsection{Severely and extremely ill-posed problems}
\label{sec:proof:severly}
\begin{proof}[Proof of Theorem \ref{th:WN:extremely}]
Assume for brevity that we have the exact equality $\kappa_i = \exp(-\gamma i^p)$.
Dealing with the general case is straightforward, but makes the proofs somewhat
lengthier.

Since $Y_i | f_i \sim N(\kappa_if_i, n^{-1})$ and $f_i \sim N(0, \lambda_i)$
for $i \leq k_n$, the posterior distribution (for $Kf$) can be written as 
$(Kf)_i |Y^n \sim N(\sqrt{nt_{i,n}}Y_i,v_{i,n})$ for $i \leq k_n$, where
%
%
%
% \[
%
%     Kf | Y^n \sim \bigotimes_{i=1}^{k_n}
%       N\Bigl(\frac{n\lambda_i\kappa_i^2}{1+n\lambda_i\kappa_i^2}Y_i,
%              \frac{\lambda_i\kappa_i^2}{1+n\lambda_i\kappa_i^2}\Bigr).
%
% \]
%
%The posterior is denoted by $\Pi_n(\, \cdot \, | Y^n)$ and we also let
%
\[
    v_{i,n} = \frac{\lambda_i\kappa_i^2}{1+n\lambda_i\kappa_i^2},
    \qquad
    t_{i,n} = \frac{n\lambda_i^2\kappa_i^4}{(1+n\lambda_i\kappa_i^2)^2}.
\]
Since the posterior is Gaussian, we have
\begin{equation}\label{eq:postrisk}
    \int\|Kf-Kf_0\|^2\, d\Pi(Kf|Y^n) = \|\widehat{Kf}-Kf_0\|^2+\sum_{i\leq k_n} v_{i,n},
\end{equation}
where $\widehat{Kf}$ denotes the posterior mean and can be rewritten as:
\[
\begin{split}
    \widehat{Kf} &= \Bigl(\frac{n\lambda_i\kappa_i^2}{1+n\lambda_i\kappa_i^2}Y_i\Bigr)_{i=1}^{k_n}
    = \Bigl(\frac{n\lambda_i\kappa_i^3f_{0,i}}{1+n\lambda_i\kappa_i^2}
      + \frac{\sqrt{n}\lambda_i\kappa_i^2Z_i}{1+n\lambda_i\kappa_i^2}\Bigr)_{i=1}^{k_n}\\
    &=: \E\widehat{Kf} + (\sqrt{t_{i,n}}Z_i)_{i=1}^{k_n}.
\end{split}
\]

By Markov's inequality the left side of (\ref{eq:postrisk}) is an upper bound to
$M_n^2\epsi_n^2$ times the desired posterior probability. Therefore, in order to show that
$\Pi(f: \|Kf-Kf_0\| \geq M_n\epsi_n|Y^n)$ goes to zero in probability, it suffices to show
that the expectation (under the true $f_0$) of the right hand side of (\ref{eq:postrisk}) is
bounded by a multiple of $\epsi_n^2$. The last term is deterministic. As for the first
term we have
\[
    \E\|\widehat{Kf}-Kf_0\|^2 = \|\E\widehat{Kf}-Kf_0\|^2 + \sum_{i\leq k_n} t_{i,n}.
\]
We also observe
\[
    \|\E\widehat{Kf}-Kf_0 \|^2 
	   = \sum_{i\leq k_n} \frac{\kappa_i^2f_{0,i}^2}{(1+n\lambda_i\kappa_i^2)^2} 
	      + \sum_{i > k_n} \kappa_i^2 f_0^2.
\]
Note that $t_{i,n} \leq n^{-1}$ and $s_{i,n} \leq n^{-1}$, hence
\[
		\sum_{i\leq k_n} v_{i,n} \lesssim n^{-1}k_n \asymp n^{-1}(\log n)^{\frac{1}{p}}, \quad
		\sum_{i\leq k_n} t_{i,n} \lesssim n^{-1}k_n \asymp n^{-1}(\log n)^{\frac{1}{p}}.
\]
By Lemma~\ref{lem:inequalities}
\[
    \sum_{i\leq k_n} \frac{\kappa_i^2f_{0,i}^2}{(1+n\lambda_i\kappa_i^2)^2}
        + \sum_{i > k_n} \kappa_i^2 f_{0,i}^2
    \lesssim \|f_0\|^2_\beta n^{-\frac{2\gamma}{\xi+2\gamma}}(\log n)^{-\frac{2\beta}{p}+\frac{2\gamma\alpha}{p(\xi+2\gamma)}}.
\]
Therefore, the posterior contraction rate for the direct problem is given by
\[
    (\log n)^{-\frac{\beta}{p}+\frac{\gamma\alpha}{p(\xi+2\gamma)}} n^{-\frac{\gamma}{\xi+2\gamma}}, 
\]
and is uniform over Sobolev balls of fixed radius. [This bound is also valid if $\xi = 0$ and $\alpha \geq 1+2\beta$.] 

By (\ref{eq:mod}) an upper bound for the modulus of continuity is given by 
\begin{align*}
	\omega(\mathcal{S}_n,f_0, \|\cdot\|, \|\cdot\|,M_n\epsilon_n) 
	&\lesssim M_n \exp(\gamma k_n^p) \epsilon_n + k_n^{-\beta}\\
	&\lesssim M_n n^{\frac{\gamma}{\xi+2\gamma}} (\log n)^{-\frac{\gamma\alpha}{p(\xi+2\gamma)}} \epsilon_n + (\log n)^{-\frac{\beta}{p}}\\
	&\lesssim M_n (\log n)^{-\frac{\beta}{p}},
\end{align*}
which ends the proof.
\end{proof}

\begin{lemm}
\label{lem:inequalities}
It holds that
\[
    \sum_{i\leq k_n} \frac{\kappa_i^2f_{0,i}^2}{(1+n\lambda_i\kappa_i^2)^2}
        + \sum_{i > k_n} \kappa_i^2 f_{0,i}^2
    \lesssim \|f_0\|^2_\beta n^{-\frac{2\gamma}{\xi+2\gamma}}(\log n)^{-\frac{2\beta}{p}+\frac{2\gamma\alpha}{p(\xi+2\gamma)}}.
\]
\end{lemm}

\begin{proof}
As for the first sum we have
\[
\begin{split}
	\sum_{i\leq k_n} \frac{\kappa_i^2f_{0,i}^2}{(1+n\lambda_i\kappa_i^2)^2} &\leq 
	n^{-2} \sum_{i\leq k_n} \lambda_i^{-2}\kappa_i^{-2}i^{-2\beta}i^{2\beta}f_{0,i}^2\\
	&= n^{-2} \sum_{i\leq k_n} i^{2(\alpha-\beta)}\exp(2(\xi+\gamma)i^p)i^{2\beta}f_{0,i}^2,
\end{split}
\]
and for $k_n$ large enough all terms $i^{2(\alpha-\beta)}\exp(2(\xi+\gamma)i^p)$ are dominated 
by $k_n^{2(\alpha-\beta)}\exp(2(\xi+\gamma)k_n^p)$, so
\begin{equation}
\label{eq:firstsum}
	\sum_{i\leq k_n} \frac{\kappa_i^2f_{0,i}^2}{(1+n\lambda_i\kappa_i^2)^2} \leq
	 n^{-2}k_n^{2(\alpha-\beta)}\exp(2(\xi+\gamma)k_n^p)\|f_0\|^2_\beta.
\end{equation}

As for the second sum we note that
\[
	\sum_{i > k_n} \kappa_i^2 f_{0,i}^2 = \sum_{i > k_n} \exp(-2\gamma i^p)i^{-2\beta}i^{2\beta} f_{0,i}^2,
\]
and since $\exp(-2\gamma i^p)i^{-2\beta}$ is monotone decreasing
\begin{equation}
\label{eq:secondsum}
	\sum_{i > k_n} \kappa_i^2 f_{0,i}^2 
	  \leq \exp(-2\gamma k_n^p)k_n^{-2\beta}\|f_0\|^2_\beta.
\end{equation}

Recall that $\exp(k_n^p) = (nk_n^{-\alpha})^{1/(\xi+2\gamma)}$  
and therefore we can rewrite the bounds in (\ref{eq:firstsum}) and (\ref{eq:secondsum}) 
as
\[
	n^{-2}k_n^{2(\alpha-\beta)}\bigl(nk_n^{-\alpha}\bigr)^{\frac{2(\xi+\gamma)}{\xi+2\gamma}} =  
	n^{-\frac{2\gamma}{\xi+2\gamma}} k_n^{-2\beta + \frac{2\gamma\alpha}{\xi+2\gamma}},
\]
and
\[
	k_n^{-2\beta}\bigl(nk_n^{-\alpha}\bigr)^{-\frac{2\gamma}{\xi+2\gamma}} = 
	n^{-\frac{2\gamma}{\xi+2\gamma}} k_n^{-2\beta + \frac{2\gamma\alpha}{\xi+2\gamma}}.
\]
Finally, since $k_n$ in this case can be taken of the order $(\log n)^{1/p}$, we obtain the 
desired upper bound.
\end{proof}

\subsection{Proofs of Section~\ref{sec:regression}} 

\subsubsection{Numerical differentiation using spline prior} 
\label{sec:proof:voltera}
We first compute an upper bound for the modulus of continuity. 
For $a \in \R^J$ we define $\Delta(a)\in \R^{J-1}$ such that 
$\Delta(a)_i = a_{i+1} - a_i$, for $i = 1, \ldots, (J-1)$. 
Given conditions \textbf{D1} and \textbf{D2} we get,
\[
\begin{split}
	\|f\|_n^2 &= {J^2} \Delta(a)' \Sigma_{n}^{q-1} \Delta(a) 
		\lesssim {J^2} \frac{1}{J-1} \|\Delta(a)\|_{J-1}^2 \\ 
	  &\lesssim {J^2} \frac{1}{J-1} \|a\|_J^2 \lesssim {J^2} \|Kf\|_n^2.
\end{split}
\]
To apply Theorem \ref{th:main}, we first need to derive a contraction rate for $Kf$. 
Note that in this case we simply have a standard non parametric regression 
model with a spline prior. This model has been extensively studied in the 
literature (see, e.g., \citep{GhosalVdv07} or \citep{JongeZantenSplines}) 
and we can easily adapt their results to derive minimax adaptive contraction rates. 

\begin{lemm}
Let $\Pi$ be as in Theorem \ref{th:regression:voltera}. 
Let $Y_n$ be sampled form model \ref{eq:regression:model} 
with $f = f_0$ and assume that $f_0 \in \Theta(\beta,L,H)$ 
with $\beta \leq q-1$. Then there exists a constant $C$ depending only on 
$H$, $L$, $\Pi$, and $q$ such that
\[
	\E_0\Pi\bigl(\|Kf-Kf_0\|_n \geq C n^{-\frac{\beta+1}{2\beta +3}}(\log n)^r \bigm| Y_n\bigr) \to 0
\]
with $r = (1 \vee t)\beta/(2\beta +1)$.  
\label{lem:adapt:cvrate:splines}
\end{lemm}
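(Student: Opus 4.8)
The plan is to recognize that Lemma~\ref{lem:adapt:cvrate:splines} is a statement about the \emph{direct} problem: the prior in~(\ref{eq:regression:prior}) induces the usual spline prior $Kf(x) = \sum_{j=1}^{J} a_j B_{j,q}(x)$ on $Kf$, and $Kf_0 \in \mathcal{H}(\beta+1, L)$ whenever $f_0 \in \mathcal{H}(\beta, L)$. Thus the problem reduces to an ordinary adaptive nonparametric regression problem with a random-$J$ spline prior, for a truth of H\"older smoothness $\beta+1$, and the target rate $n^{-(\beta+1)/(2\beta+3)}(\log n)^r = n^{-(\beta+1)/(2(\beta+1)+1)}(\log n)^r$ is exactly the minimax-adaptive rate for smoothness $\beta+1$. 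The strategy is therefore to invoke the general posterior contraction machinery of Ghosal and van~der~Vaart~\citep{GhosalVdv07} (cf.\ also \citep{JongeZantenSplines}, \citep{ShenGhosalAdaptive2014}), verifying its three conditions: a prior-mass (Kullback--Leibler) condition, a sieve / remaining-mass condition, and an entropy / testing condition, all in the empirical norm $\|\cdot\|_n$.

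First I would fix a resolution level $J_n \asymp (n/\log n)^{1/(2(\beta+1)+1)}$ (up to the logarithmic factor dictated by $r$) and use the spline approximation theory for H\"older functions: there exists $a^\ast \in \R^{J_n}$ with $\|a^\ast\|_\infty \lesssim H$ such that $\|Kf_0 - \sum_j a^\ast_j B_{j,q}\|_\infty \lesssim J_n^{-(\beta+1)}$, which controls the bias. For the prior-mass condition I would restrict $J = J_n$ (paying the factor $\Pi_J(j \le J \le 2j) \ge \exp(-c_d J_n (\log J_n)^t)$ from~(\ref{eq:regression:volterra:conditionJ})) and then invoke~(\ref{eq:regression:volterra:conditionPi_J}) with $a_0 = a^\ast$ to lower-bound the probability of an $\|a - a^\ast\|_{J_n} \le \epsilon_n$ ball by $\exp(-c_2 J_n \log(1/\epsilon_n))$; translating the coefficient ball into a $\|\cdot\|_n$-ball on $Kf$ uses condition \textbf{D1}, since $\|\sum_j (a_j - a^\ast_j) B_{j,q}\|_n^2 \asymp J_n^{-1}\|a - a^\ast\|_{J_n}^2$. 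Combining bias and coefficient-ball mass shows the Kullback--Leibler neighborhood~$B_n(Kf_0,\epsilon_n)$ receives prior mass $\gtrsim \exp(-C n \epsilon_n^2)$ with $\epsilon_n \asymp n^{-(\beta+1)/(2\beta+3)}(\log n)^r$.

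For the sieve and entropy conditions I would take $\mathcal{F}_n = \{Kf : J \le A J_n,\ \|a\|_{J} \le R_n\}$ for a suitable slowly growing $R_n$ and constant $A$; the complement has prior mass $\lesssim \exp(-c_u A J_n (\log J_n)^t)$ by the tail bound in~(\ref{eq:regression:volterra:conditionJ}) (together with a crude tail bound on $\|a\|_J$), which is summably small relative to $\exp(-C n\epsilon_n^2)$ for $A$ large. The local entropy of a $J$-dimensional spline ball in $\|\cdot\|_n$ is of order $J \log(1/\epsilon)$, so the entropy condition $\log N(\epsilon_n, \mathcal{F}_n, \|\cdot\|_n) \lesssim n\epsilon_n^2$ holds by the same dimension count. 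Conditions \textbf{D1} and \textbf{D2} guarantee the equivalence between the empirical norm and the coefficient norm used throughout, so the existence of exponentially powerful tests follows from the standard Gaussian-regression testing argument. Assembling these three ingredients in the theorem of~\citep{GhosalVdv07} yields the stated direct-problem contraction at rate $C n^{-(\beta+1)/(2\beta+3)}(\log n)^r$, uniformly over $\Theta(\beta, L, H)$.

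The main obstacle I anticipate is the careful bookkeeping of the logarithmic factors and their uniformity in $\beta$ across the range $\beta \le q-1$. The exponent $r = (1\vee t)\beta/(2\beta+1)$ must emerge precisely from balancing the prior-mass cost $J_n(\log J_n)^t$ against $n\epsilon_n^2$, and getting a single constant $C$ that works \emph{uniformly} over the compact range of smoothness (as claimed by the supremum over $\beta \le q-1$ in the main theorem) requires that the spline approximation constants and the design constants in \textbf{D1}--\textbf{D2} be controlled independently of $\beta$. A secondary technical point is that~(\ref{eq:regression:volterra:conditionPi_J}) is stated for $\|a_0\|_\infty \le H$, so I must ensure the approximating coefficient vector $a^\ast$ stays uniformly bounded, which follows from the stability (bounded Lebesgue constant) of the B-spline basis but should be stated explicitly.
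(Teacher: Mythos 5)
Your overall route is the same as the paper's: reduce to the direct adaptive spline--regression problem for a truth of smoothness $\beta+1$, and verify the three conditions of the general theorem of \citep{GhosalVdv07} --- prior mass of Kullback--Leibler balls via spline approximation, condition (\ref{eq:regression:volterra:conditionPi_J}) and \textbf{D1}; remaining mass of a sieve via the tail of $\Pi_J$ in (\ref{eq:regression:volterra:conditionJ}); entropy by a dimension count. The prior-mass part of your argument matches the paper's almost verbatim, including the bias control through a bounded approximating coefficient vector and the translation of coefficient balls into $\|\cdot\|_n$-balls via \textbf{D1}.

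The one step that would fail as written is the sieve. You take $\mathcal{F}_n=\{Kf: J\le AJ_n,\ \|a\|_J\le R_n\}$ and claim its complement has exponentially small prior mass ``together with a crude tail bound on $\|a\|_J$''. No tail bound on $\Pi_{a,J}$ is available: condition (\ref{eq:regression:volterra:conditionPi_J}) is only a small-ball \emph{lower} bound, and for, say, a product of heavy-tailed coefficient densities the mass of $\{\|a\|_J>R_n\}$ decays only polynomially in $R_n$, nowhere near the required $\exp(-Cn\epsilon_n^2)$. The paper sidesteps this by taking the sieve to be $\{J\le J_n,\ a\in\R^J\}$ with no truncation of the coefficients and bounding the \emph{local} entropy: by \textbf{D1}, the splines with $J\le J_n$ at empirical distance at most $\epsilon$ from $Kf_0$ sit inside a Euclidean coefficient ball of radius $\asymp J^{1/2}\epsilon$, so the covering number at scale $\xi\epsilon$ is $(C/\xi)^{J}$ and the local entropy is $\lesssim J_n\lesssim n\epsilon_n^2$ regardless of the size of $\|a\|_J$ (this is the argument of Theorem 12 of \citep{GhosalVdv07}, which the paper invokes). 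Replace your global sieve by this local-entropy argument and the rest of your proof goes through; your closing remarks --- balancing the $J_n(\log J_n)^t$ cost of $\Pi_J$ against $n\epsilon_n^2$ to produce $r$, and keeping $\|a^\ast\|_\infty\le H$ so that (\ref{eq:regression:volterra:conditionPi_J}) applies --- are indeed exactly the points where the exponent and the uniformity over $\beta\le q-1$ are decided.
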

Similar results have been proved in \citep{ShenGhosalAdaptive2014}, however the 
authors do not give a direct proof of their result. Here this lemma gives us directly 
the posterior contraction rate for the direct problem. 
The proof of this lemma is postponed to the end of this subsection.

\begin{proof}[Proof of Theorem~\ref{th:regression:voltera}]
We now derive the posterior contraction rate of the posterior distribution for 
the inverse problem. We first get an upper bound for the modulus of continuity, 
for $f \in \mathcal{S}_n$. Using standard approximation results on splines 
(e.g. \citep{de1978practical}), we have that for all $J$ there exists $a^0\in \R^J$ 
such that 
\[
	\Bigl\| f_0 -  \sum_{j=1}^{J-1} (a^0_{j+1} - a^0_j) (B_{j,q-1})\Bigr\|_\infty \leq (J-1)^{-\beta} \|f_0\|_\infty,
\]
and 
\[
	\Bigl\|Kf_0 - \sum_{j=1}^J a_j^0 B_{j,q} \Bigr\|_\infty \leq J^{-\beta -1} \|Kf_0\|_\infty.
\]
We thus deduce that for $J \geq 2$, 
\[
\begin{split}
	\|f - f_0\|_n &\leq \|f - f_{a^0}\|_n + \|f_{a^0} - f_0\|_n  \\ 
	&\leq C J^{-1} \| Kf - Kf_n\|_n + \|f_{a^0} - f_0\|_n \\ 
	& \leq C J^{-1} \| Kf - Kf_0\|_n + \|Kf_{a^0} - Kf_0\|_n + \|f_{a^0} - f_0\|_n.
\end{split}
\]

We can thus deduce an upper bound for the modulus of continuity %$\omega(S_n,f_0,||\cdot||_n, ||\cdot||_n,\delta)$ 
\[
	\omega(S_n,f_0,\|\cdot\|_n, \|\cdot\|_n,\delta) \leq J_n \delta.
\]
Applying Theorem \ref{thm:main} gives
\[
	\E_0 \Pi\bigl(\|f - f_0\|_n \geq C n^{-\frac{\beta}{2\beta + 3}} (\log n)^r \bigm| Y^n \bigr) \to 0,  
\]
for a constant $C>0$ depending only on $\|f_0\|_\infty$, $r$, and $\Pi$. 
\end{proof}

\begin{proof}[Proof of Lemma \ref{lem:adapt:cvrate:splines}]
We prove the lemma using Theorem 4 in \citep{GhosalVdv07}. 
Let $\beta \leq q$ and $f_0$ be in $\mathcal{H}(\beta,L)$ and set 
$\epsilon_n = C n^{-(\beta+1)/(2\beta +3)} (\log n)^r$ with $r = (1\vee t)\beta/(2\beta +1)$. 
Set $J_n := J_0 n \epsilon_n^{2} \log(n)^{-t}$ for a fixed constant $J_0>0$ and 
consider the sets $\mathcal{S}_n$ defined by 
\[
	\mathcal{S}_n := \bigl\{ J \leq J_n, a \in \R^J\bigr\}
\]
We first control the local entropy function 
$N(\epsilon, \{ J,a \in \mathcal{S}_n : \|Kf - Kf_0\|_n \leq \epsilon_n \} ,\|\cdot\|_n)$. 
By using the same reasoning as in the proof of Theorem 12 in \citep{GhosalVdv07}, for all $J \in S_n$ we get
\[
	\log(N(\epsilon, \{ J,a \in \mathcal{S}_n : \|Kf - Kf_0\|_n \leq \epsilon_n \} , \|\cdot\|_n) )\leq n \epsilon_n^2.
\]
The prior mass of the set $\mathcal{S}_n$ is easily controlled using condition 
(\ref{eq:regression:volterra:conditionJ}):
\[
	\Pi(\mathcal{S}_n^c) = \Pi_J(J > J_n) \leq \exp(-c_u J_n (\log J_n)^t). 
\]
We now need to control the prior mass of Kullback--Leibler neighborhoods of $Kf_0$. 
Note that this condition will also be useful to apply Lemma \ref{lem:main} and 
thus derive the posterior contraction rate for the direct problem.
Let $B_n(Kf_0, \epsilon)$ be defined as in (\ref{eq:main:de:KLneighbour}). 

Using the results of Section 7.3 in \citep{GhosalVdv07}, setting $\tilde{J}_n = J_n (\log n)^{-r/\beta}$ 
we deduce that for some constant $c$ depending only on $\sigma$ 
\[
	B_n(Kf_0,\epsilon_n) \supset \{ \tilde{J}_n \leq J \leq 2 \tilde{J}_n, \|Kf - Kf_0\|_n^2 \leq c\epsilon_n^2 \}.
\]
Standard approximation results on splines give that for all $J$ there exists a 
sequence $a_0= (a_{0,1}, \dots , a_{0,J})$ such that 
\[
	\Bigl\|Kf_0 - \sum_{j=1}^J a_{0,j} B_{j,q} \Bigr\|_n \leq J^{-\beta -1} \|Kf_0\|_\beta \leq J^{-\beta -1} L.
\]
Given condition {\bf D1} on the design, we thus have that for a constant $c'>0$ 
depending only  on $\sigma$ and $L$
\[
	B_n(Kf_0,\epsilon_n) \supset 
		\bigl\{ \tilde{J}_n \leq J \leq 2 \tilde{J}_n, \|a-a_0\|_{\tilde{J}_n} \leq c' \tilde{J}_n^{1/2} \epsilon_n \bigr\}.
\]
Therefore, we obtain a lower bound on the prior mass of a Kullback--Leibler 
neighbourhood of $Kf_0$:
\[
\begin{split}
	\Pi(B_n(Kf_0,\epsilon_n)) &\geq 
			\Pi\bigl( \tilde{J}_n \leq J \leq 2\tilde{J}_n, \|a - a_0 \|_n \leq  c'\tilde{J}_n^{1/2} \epsilon_n \bigr) \\
		&\geq \exp\bigl(- \tilde{J}_n ( c_d(\log \tilde{J}_n)^t + c_2 \log(\tilde{J}_n^{-1/2}\epsilon_n^{-1}))\bigr).
\end{split}
\]
We thus have for $C_2>0$,
\begin{equation}
	\frac{\Pi(\mathcal{S}_n^c)}{\Pi(B_n(Kf_0,\epsilon_n))} \leq \exp(-C_2 J_n (\log J_n)^t), 
\end{equation}
which together with Theorem 4 in \citep{GhosalVdv07} ends the proof. 
\end{proof}

\subsubsection{Deconvolution using mixture priors} 
\label{sec:proof:reg:deconv}
\begin{proof}[Proof of Theorem \ref{th:regression:deconvolution}]
We first specify the sets $\mathcal{S}_n$ for which we can control the modulus of continuity. 
Denoting $\fha$ the Fourier transform of $f$, for any sequence $a_n$ going to infinity and 
$I_n = [-a_n, a_n]$ we define for $a>0$
\begin{equation}
	\mathcal{S}_n = \Bigl\{f: \int_{I_n} |\fha(t)|^2dt \geq a \int_{I_n^c} |\fha(t)|^2dt  \Bigr\}.
\label{eq:regression:mixtur:def:Sn}
\end{equation}
We control the modulus of continuity $\omega(\mathcal{S}_n,f_0, \|\cdot\|, \|\cdot\|,\delta)$ 
in a similar way as in Section \ref{sec:modulus}. 
First consider $f \in \mathcal{S}_n$, and denote $\fha_n(\cdot) = \fha(\cdot) \I_{I_n}(\cdot)$. 
We then have 
\[
	\|f\|^2 = \|\fha\|^2 \leq (1+a)\|\fha_n\|^2 
	   \lesssim a_n^{2p} \int_{I_n} |\fha|^2 | \hat{\lambda}|^2  
	   \lesssim a_n^{2p} \|Kf\|^2.
\]
Note that for $f_0 \in W^{\beta}(L)$ we have for 
$f_{0,n}(x) = \int \hat{f}_{0,n}(t) e^{-itx} dt$
\[
	\|f_0 - f_{0,n}\| \leq 2 a_n^{-\beta} L, \qquad  \|Kf_0 - Kf_{0,n}\| \leq 2 a_n^{-(\beta +p)}L',
\]
which gives 
\begin{equation}
	\omega(\mathcal{S}_n,f_0, \|\cdot\|, \|\cdot\|,\delta) \lesssim a_n^{p} \delta + a_n^{-\beta}. 
\label{eq:regressim:mixture:modulus}
\end{equation}

We now control the prior mass of $\mathcal{S}_n^c$ in order to apply Lemma~\ref{lem:main}. 
Denote by $l_n = \lfloor a_n /(2\Pi J) \rfloor$, $L_n = \lceil a_n /(2\Pi J) \rceil$. 
We have 
\[
\begin{split}
	\int_{I_n} |\fha(t)|^2 dt &\geq  
			2\pi J \int_{-L_n}^{l_n} e^{-4\pi^2t^2v^2} \Bigl| \sum_{j=1}^J w_j e^{2\pi j t}  \Bigr| dt \\
		& =  2\pi J \sum_{l=-L_n}^{l_n} \int_{l}^{l+1} e^{-4\pi^2t^2v^2} \Bigl| \sum_{j=1}^J w_j e^{2\pi j t}  \Bigr| dt \\ 
		& =  2\pi J \int_{0}^{1}  \Bigl| \sum_{j=1}^J w_j e^{2\pi j t}  \Bigr| \sum_{l=-L_n}^{l_n}  e^{-4\pi^2(t+l)^2v^2} dt \\ 
		& \geq  2\pi J \sum_{l=-L_n}^{l_n}  e^{-4\pi^2(1+|l|)^2v^2}   \int_{0}^{1}  \Bigl| \sum_{j=1}^J w_j e^{2\pi j t}  \Bigr| dt, 
\end{split}
\] 
and similarly we get 
\[
\begin{split}
	\int_{I_n^c} |\fha(t) |^2 dt 
		&\leq 2\pi J   \int_{0}^{1}  \Bigl| \sum_{j=1}^J w_j e^{2\pi j t}  \Bigr| \Bigl(
				\sum_{l=-\infty}^{-L_n}  e^{-4\pi^2(t+l)^2v^2}  
					+ \sum_{l=l_n}^{\infty} e^{-4\pi^2(t+l)^2v^2}\Bigr) dt    \\ 
		& \leq 2\pi J    \Bigl(  \sum_{l=-\infty}^{-L_n}  e^{-4\pi^2l^2v^2}  
			+ \sum_{l=l_n}^{\infty} e^{-4\pi^2l^2v^2}    \Bigr) 
				\int_{0}^{1}  \Bigl| \sum_{j=1}^J w_j e^{2\pi j t}  \Bigr| dt.
\end{split}
\] 
We thus deduce that for absolute constants $C'>0$
\[
	\Pi(\mathcal{S}_n^c) \leq \Pi(v \leq J/a_n ) \lesssim e^{-C'a_n \log a_n}. 
\]
We end the proof by combining this result (choosing $a_n = n\epsilon_n^2$) with Lemma~\ref{lem:main}, 
Lemma~\ref{lem:theotherlemma}, and Theorem~\ref{thm:main}.
\end{proof}

\begin{lemm}\label{lem:theotherlemma}
Let $Y^n$ be sample from \eqref{eq:regression:model} with $K$ defined by \eqref{eq:regression:convol:operator}. Let $\Pi$ be as in Theorem \ref{th:regression:deconvolution}. For all $\beta \in \mathbb{N}^*$ if $f_0\in W^\beta(L)$ with support on $[0,1]$ and $||f_0||_\infty \leq M$, we have for $C>0$ large enough if $\epsilon_n =  n^{-(\beta + p)/(1+2\beta + 2p)} (\log n)^{r}$, where $r$ is some constant, 

\[
	\E_0\Pi(\|Kf - Kf_0\| \geq C  \epsilon_n |Y^n) \to 0,  
\]
and
\[
	\Pi(\|Kf-Kf_0\| \leq  \epsilon_n ) \geq e^{-n\epsilon_n^2}.
\]
\end{lemm}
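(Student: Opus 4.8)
The plan is to read the conclusion as two standard facts about the \emph{direct} regression problem of recovering $Kf_0$, and to obtain them from the Gaussian-mixture machinery of \citep{dejongvzanten12}, as extended to convolved Sobolev targets in \citep{scricciolo2014adaptive}. The starting observation is structural: since the prior draws $f=\sum_{j=1}^J w_j\Psi_{j,v}$, linearity of $K$ gives $Kf=\sum_{j=1}^J w_j(\lambda\star\Psi_{j,v})$, so the induced prior on $Kf$ is again a location mixture, now with the smooth kernel $\lambda\star\Psi_v$ and iid $N(0,1)$ weights. Because $f_0\in W^\beta(L)$ implies $Kf_0\in W^{\beta+p}(L')$ (as noted before the theorem) and $Kf_0$ is effectively supported on an interval of length $O(\log n)$, the target of the direct problem has Sobolev smoothness $s:=\beta+p$, whose one-dimensional minimax rate $n^{-s/(2s+1)}=n^{-(\beta+p)/(1+2\beta+2p)}$ matches $\epsilon_n$ up to the logarithmic factor. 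This explains why $\epsilon_n$ is the natural rate here.

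To produce the first display I would invoke the general posterior contraction theorem for independent, non-identically distributed observations of \citep{GhosalVdv07}. Under the Gaussian fixed-design model \eqref{eq:regression:model} the relevant Kullback--Leibler neighbourhoods are comparable to balls in the empirical norm $\|\cdot\|_n$, which by the design assumption is in turn comparable to the $L_2$ norm on the range of the sieve. It then suffices to verify (i) the prior-mass condition $\Pi(\|Kf-Kf_0\|\le\epsilon_n)\ge e^{-n\epsilon_n^2}$, which is exactly the second display; and (ii) a sieve $\bn$ with $\log N(\epsilon_n,\bn,\|\cdot\|)\lesssim n\epsilon_n^2$ and remaining mass $\Pi(\bn^c)$ exponentially smaller than $e^{-n\epsilon_n^2}$.

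For the prior-mass bound I would approximate $Kf_0$ by a finite mixture of the convolved kernel. Choosing $J_n\asymp\epsilon_n^{-1/(\beta+p)}$ and $v_n\asymp 1/J_n$ (up to logarithmic factors), the approximation results of \citep{dejongvzanten12}, adapted to the kernel $\lambda\star\Psi_v$ and to a Sobolev target via \citep{scricciolo2014adaptive}, furnish a deterministic $w^0\in\R^{J_n}$ with $\|\sum_j w^0_j(\lambda\star\Psi_{j,v_n})-Kf_0\|\lesssim\epsilon_n$. Combining the mass that $\Pi_v$ and $\Pi_J$ place near $v_n$ and $J_n$ (controlled by the tail conditions of Theorem~\ref{th:regression:deconvolution}) with the probability that the iid $N(0,1)$ weights satisfy $\|w-w^0\|_{J_n}\le\delta$ yields $\Pi(\|Kf-Kf_0\|\le\epsilon_n)\ge e^{-cJ_n\log(1/\epsilon_n)}\ge e^{-n\epsilon_n^2}$ for the stated power $r$; the Gaussian prior on the weights is precisely what makes this an RKHS concentration-function computation in the sense of \citep{dejongvzanten12}. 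For the sieve I would take $\bn=\{\sum_{j\le J}w_j\Psi_{j,v}:J\le\bar J_n,\ v\ge\underline v_n,\ \|w\|\le W_n\}$; its complementary mass is controlled by the upper tail of $\Pi_J$, the lower tail of $\Pi_v$ and Gaussian tails of the weights, each made exponentially small by the prescribed constants, while the entropy of $\{Kf:f\in\bn\}$ is bounded by $J_n\log(1/\epsilon_n)$ using smoothness and Lipschitz dependence of $\lambda\star\Psi_{j,v}$ on $(z_j,v)$.

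The genuinely new point, and the main obstacle, is that the mixing kernel for $Kf$ is the convolved kernel $\lambda\star\Psi_v$ rather than a pure Gaussian, so the approximation-theoretic estimates of \citep{dejongvzanten12}---originally for Gaussian mixtures approximating H\"older functions---must be transported to this kernel approximating the Sobolev function $Kf_0$. This is exactly the content carried by \citep{scricciolo2014adaptive}, and the delicate step is checking that the mildly ill-posed tail bound \eqref{eq:regressim:deconv:illpos} delivers effective smoothness $\beta+p$ together with the correct logarithmic power $r$. The remaining ingredients---passing between $\|\cdot\|_n$ and $L_2$ through the design conditions, and the Gaussian small-ball and tail estimates---are routine.
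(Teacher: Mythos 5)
Your overall architecture matches the paper's: the direct problem is ordinary fixed-design regression of the $(\beta+p)$-smooth function $Kf_0$ under a conditionally Gaussian location-mixture prior, and both displays are extracted from the machinery of de Jonge and van Zanten (2012) together with Scricciolo's (2014) Sobolev extension. The execution differs in two places. First, the paper does not rebuild the sieve/entropy/testing argument from the general non-i.i.d.\ theorem of Ghosal and van der Vaart; it invokes Theorem 2.2 of de Jonge and van Zanten directly, so everything reduces to the concentration function, namely the small-ball probability (their Lemma 3.3, which needs only that $K\Psi_v=\lambda\star\Psi_v\in\mathcal{P}_\infty$) and the RKHS decentering. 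Your route is viable but re-proves what that theorem already packages. Second, and more substantively, you locate the main difficulty in transporting the Gaussian-mixture approximation theory to the convolved kernel $\lambda\star\Psi_v$; the paper avoids this entirely. It approximates $\omega_0=Kf_0$ by a discretized mixture of \emph{pure} Gaussians, $h=\sum_j T_{\alpha,v}(\omega_0)(Jv)^{-1}\Psi\bigl((\cdot-j/J)/v\bigr)$ with $\alpha=\beta+p$, where Lemma 9 of Scricciolo is used only to show that de Jonge and van Zanten's Lemma 3.4 (the bound on $\|\Psi_v\star T_{\alpha,v}(\omega_0)-\omega_0\|_\infty$ of order $v^{\alpha}$) extends from H\"older to Sobolev targets; no approximation estimates for the kernel $\lambda\star\Psi_v$ are developed. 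Finally, a nontrivial portion of the paper's proof is the truncation of the node set to $[-2c_x\log n,2c_x\log n]$: it costs a factor $\log n$ in the RKHS-norm bound $\|h\|^2_{H^{J,v}}\lesssim\|T_{\alpha,v}(\omega_0)\|^2\log n$ and produces Gaussian-tail remainders of order $\|T_{\alpha,v}(\omega_0)\|_\infty v^{-1}e^{-c_x^2(\log n)^2/(2v^2)}$ that must be shown negligible; you acknowledge the $O(\log n)$ support but do not carry out these estimates. None of this invalidates your plan, but the step you single out as the genuinely new obstacle is precisely the one the paper's construction is designed to sidestep, and your version of it is the one piece of your argument not already available in the cited references.
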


\begin{proof}%[Proof of Lemma \ref{lem:theotherlemma}]
This proof is based on the results of \citep{dejongvzanten12} and \citep{scricciolo2014adaptive}.
We adapt the results of \citep{dejongvzanten12} to our setting in order 
to control the posterior mass of the Kullback--Leibler neighbourhoods of $Kf_0$ 
and the posterior contraction rate for the direct problem. 
Following their notation we have that $K\Psi_v \in \mathcal{P}_\infty$, 
and thus the small ball probability $\Pi(\|f\|_\infty \leq \epsilon)$ 
can be controlled by their Lemma 3.3. We then extend their Lemma 3.5 to our setting. 
Note that with Lemma 9 of \citep{scricciolo2014adaptive}, 
Lemma 3.4 of \citep{dejongvzanten12} holds for the same $T_{\alpha,v}$ with $\alpha = \beta + p$. 
Choosing $h$ to be as in the proof of Lemma 3.5 of \citep{dejongvzanten12} and denoting 
$\omega_0 = f_0 \star \lambda$, we have 
\[
	h(x) = \sum_{j/J \in [-2c_x \log n, 2c_x \log n]} 
			T_{\alpha,v}(\omega_0) \frac{1}{J v} \Psi\Bigl( \frac{x-j/J}{v} \Bigr),
\]
and thus deduce 
\[
	\|h\|^2_{H^{J,v}} \leq 2c_x \|T_{\alpha,v}(\omega_0)\|^2  \log n.
\]
Using their decomposition (3.8), we control 
$|h(x) - \Psi_v \star T_{\alpha,v}(\omega_0)(x)|$ along the same lines 
as in their computations on page 3312. We have 
\[
\begin{split}
	&|h(x) - \Psi_v \star T_{\alpha,v}(\omega_0)(x)|\\
		&\qquad \leq \Bigl|h(x) - \int_{-2c_x \log n}^{2c_x \log n} T_{\alpha,v}(\omega_0)(y) \Psi_v(x-y) dy \Bigr|  \\ 
		&\qquad + \Bigl| \int_{-\infty}^{-2c_x \log n} T_{\alpha,v}(\omega_0)(y) \Psi_v(x-y) dy \Bigr|  
		   + \Bigl|\int_{2c_x \log n}^\infty T_{\alpha,v}(\omega_0)(y) \Psi_v(x-y) dy\Bigr|
%
%\label{eq:regression:deconv:proof:approx}
\end{split}
\]
The first term on the right hand side of the above display can be controlled as in the proof of Lemma 3.5 of \citep{dejongvzanten12}. 
For the last two terms, we have 
\[
\begin{split}
	\Bigl| \int_{-\infty}^{-2c_x \log n} &T_{\alpha,v}(\omega_0)(y) \Psi_v(x-y) dy \Bigr| 
		+  \Bigl|\int_{2c_x \log(n)}^\infty T_{\alpha,v}(\omega_0)(y) \Psi_v(x-y) dy\Bigr| \\ 
		&\lesssim \|T_{\alpha,v}(\omega_0)\|_\infty e^{-\frac{c_x^2 (\log n)^2}{2v^2} } v^{-1}.
\end{split}
\]
Following the same proof of Theorem 2.2 of \citep{dejongvzanten12}, we get
\[
	\E_0\Pi(\|Kf - Kf_0\| \geq C n^{-\frac{\beta + p}{1+2\beta + 2p}} (\log n)^{r} |Y^n) \to 0,  
\]
and similarly to their equation (2.5) we get, 
with $\epsilon_n =  n^{-(\beta + p)/(1+2\beta + 2p)} (\log n)^{r}$, 
where $r$ is some constant, 
\[
	\Pi(\|Kf-Kf_0\| \leq  \epsilon_n ) \geq e^{-n\epsilon_n^2}.
\]

\end{proof}

\section{Discussion}
\label{sec:discussion}

In this paper we propose a new approach to the problem of deriving posterior contraction rates for
linear ill-posed inverse problems. More precisely, we put a prior on the parameter of interest 
$f$ that naturally imposes the prior on $Kf$, leading to a certain rate of contraction in the direct problem. 
Next, we consider a sequence of sets on which the operator $K$ possesses a continuous inverse. Then, 
we impose additional conditions on the prior (or the posterior itself) under which 
the posterior contracts at a certain rate in the inverse problem setting. 

This is a great advantage of the Bayesian approach in this setting as when the posterior
distribution is known to contract at a given rate in the direct problem, one only has to consider
subset of high prior mass for which the norm of the inverse of the operator may be handled. Our
result seems to show that the main difficulty when considering linear inverse problems is to control
the change of metrics form $d_K$ to $d$, which is dealt here by considering the
modulus of continuity as introduced in \citep{donoho1991} and \citep{hoffmann2013adaptive}. 
It is also to be noted that contrariwise to existing methods, we do not require a Hilbertian 
structure for the parameter space, see for instance the example treated in Section \ref{sec:splines}. 
This could be particularly useful when considering nonlinear operators,
and is of potential interest when considering the case of partially 
known operators.  

We recovered (a subset of) the existing results from \citep{KVZ11}, \citep{KVZ13}, \citep{Agapiou12}, 
\citep{Agapiou14}, and \citep{ray2013bayesian}. Our approach should be viewed as a generalization 
of the ideas presented in the last paper 
%\changes
{and the existing sieve method used in the literature 
on posterior contraction}. Furthermore, we were able to go beyond the sequence setting 
as well as derive posterior contraction rates for prior distributions that were not covered by the existing theory. 
We also treated an operator that does not admit singular value decomposition. In this
sense, the approach proposed in this paper is more general, and we believe more natural, than the 
existing ones.

\section*{Acknowledgements}
The authors would like to thank 
%\changes
{the editor, the associate editor, and the referees} 
for their comments which helped to improve this paper.  
The authors are also grateful to Judith Rousseau and Eduard Belitser for helpful discussions and comments. 
This work is partially funded by the STAR cluster, ANR Bandhits, VICI Safe Statistics
and Labex ECODEC. This work is part of the second author's PhD.

%\appendix															% incorporated in Section Proofs

%\section{Proofs in the white noise setting}    
%\input{Parts/proofs_WN.tex}

%\section{Proofs in the regression setting}
%\input{Parts/proofs_regression.tex}

\bibliography{inv}
\bibliographystyle{apalike}

\end{document}